\newcommand{\Esp}{\mathbb{E}}
\theoremstyle{definition}
\newtheorem{definition}{Definition}
\theoremstyle{remark}
\newtheoremstyle{mytheorem}{0.5cm}{0.2cm}{\slshape}{ }{\bfseries}{.}{ }{}
\theoremstyle{mytheorem}
\newtheorem{theorem}[definition]{Theorem}
\newtheorem{lemma}[definition]{Lemma}
\renewcommand{\P}{\mathbf{P}}
\DeclareMathOperator{\var}{{Var}}
\DeclareMathOperator{\cov}{cov}
\DeclareMathOperator{\PAI}{PAI}
\DeclareMathOperator{\TLLN}{TLLN}
\DeclareMathOperator{\Cset}{\mathcal{C}}
\DeclareMathOperator{\one}{{ 1\hspace*{-0.55ex}I}}
\renewcommand{\epsilon}{\varepsilon}
\renewcommand{\phi}{\varphi}
\newcommand*\diff{\mathop{}\!\mathrm{d}}
\newlength{\querylen}
\title{
Replica-Mean-Field Limits of\\
Fragmentation-Interaction-Aggregation Processes\\
}
\date{}
\author{Fran\c{c}ois Baccelli \thanks{INRIA, Paris, France and D\'epartement d'informatique de l'ENS, ENS, CNRS, PSL University, Paris, France} \and Michel Davydov \thanks{INRIA, Paris, France and D\'epartement d'informatique de l'ENS, ENS, CNRS, PSL University, Paris, France}  \and Thibaud Taillefumier \thanks{Department of Mathematics and Department of Neuroscience, University of Texas, Austin, TX}}
\begin{document}

\maketitle

\begin{abstract}
Network dynamics with point-process-based interactions are of paramount modeling interest.
Unfortunately, most relevant dynamics involve complex graphs of interactions for which an exact computational treatment is impossible. 
To circumvent this difficulty, the replica-mean-field approach focuses on randomly interacting replicas of the networks of interest.
In the limit of an infinite number of replicas, these networks become analytically tractable under the so-called ``Poisson Hypothesis''.
However, in most applications, this hypothesis is only conjectured.
Here, we establish the Poisson Hypothesis for a general class of discrete-time, point-process-based dynamics, that we propose to call fragmentation-interaction-aggregation processes, and which are introduced in the present paper.
These processes feature a network of nodes, each endowed with a state governing their random activation. 
Each activation triggers the fragmentation of the activated node state and the transmission of interaction signals to downstream nodes.
In turn, the signals received by nodes are aggregated to their state.
Our main contribution is a proof of the Poisson Hypothesis for the replica-mean-field version of any network in this class.  
The proof is obtained by establishing the propagation of asymptotic independence for state variables in the limit of an infinite number of replicas.
Discrete time Galves-L\"{o}cherbach neural networks are used as a basic instance and illustration of our analysis.
\end{abstract}

\paragraph{keywords}{point process; Hawkes process; Markov process; mean-field theory; replica model; Poisson hypothesis; neural network; queuing network}
\paragraph{ams}{60K35}{60G55}

\newpage
\section{Introduction}

Epidemics propagation, chemical reactions, opinion dynamics, flow control in the Internet, and even neural computations can all be modelled via punctuate interactions between interconnected agents \cite{Pastor_Satorras_2015}\cite{Gupta_2014}\cite{Amblard_2004}\cite{baccelli:2002}\cite{Shriki_2013}.
The phenomena of interest in this context are idealized as network dynamics on a graph of agents which interact via point processes: edges between agents are the support of interactions, with edge-specific point processes registering the times at which these interactions are exerted.
Such point-process-based network dynamics constitute a very versatile class of models able to capture phenomena in natural sciences, engineering, social sciences and economics.
%
However, this versatility comes at the cost of tractability as the mathematical analysis of these dynamics is impossible except for the simplest network architectures. 
As a result, one has to resort to simplifying assumptions to go beyond numerical simulations.

Generic point-process-based networks are computationally untractable because their stochastic dynamics does not appear to belong to any known parametric class of point processes.
Replica mean-field (RMF) limits are precisely meant to circumvent this obstacle \cite{Baccelli_2019}. 
The RMF limit of a given network is an extension of this network built in such a way that
interaction point processes are parametric, e.g., Poisson. This extended network is made of infinitely many replicas of the initial network, all with the same basic structure, but with randomized interactions across replicas. 
The interest in RMF limits stems from the fact that they offer
tractable version of the original dynamics that retain some of its most important features. 
The fact that Poisson point processes arise in the
RMF version of a network is called the Poisson Hypothesis.
Thus formulated, the Poisson Hypothesis originates from communication network theory \cite{kleinrock2007communication} and is distinct from replica approaches developed in statistical physics \cite{Castellani_2005}.

Although intuitively clear and despite its usefulness, the Poisson Hypothesis is often only conjectured and/or numerically validated.
The purpose of this work is to rigorously establish the Poisson Hypothesis for the RMF limits of a broad class of point-process-based network dynamics in discrete time introduced in the present paper. This class, which will be referred to as  fragmentation-interaction-aggregation processes (FIAPs) below, includes important classes of queuing networks as special cases, as well as discrete time Galves-L\"{o}cherbach (GL) neural networks.

Galves-L\"{o}cherbach networks can be viewed as coupled Hawkes processes with spike-triggered memory resets.
Because of these memory resets, it can be shown that the dynamics of finite-size Galves-L\"{o}cherbach networks is Markovian \cite{robert_touboul}.
The RMF limit of the GL case was studied from a computational standpoint in \cite{Baccelli_2019} but in continuous time.
In the next paragraph, we use results established in \cite{Baccelli_2019} to illustrate how the Poisson Hypothesis yields tractable mean-field equations for the stationary dynamics of these RMF limits.
This is done for the simplest example of GL networks, called the ``counting-neuron'' model.

\subsection*{Illustration from the study of spiking neural networks}

The counting-neuron model consists of a fully-connected network of $K$
exchangeable neurons with homogeneous synaptic weights $\mu$. 
For each neuron $i$, $1 \leq i \leq K$, the continuously time-indexed stochastic intensity $\lambda_i$ increases by $\mu>0$
upon reception of a spike and resets upon spiking to its base rate $b>0$.
Thus, its stochastic intensity is $\lambda_i(t)=b+\mu C_i(t)$, where $C_i(t)$ is
the number of spikes received at time $t$ since the last reset. 
It can be shown that the network state $\lbrace C_1(t), \dots , C_K(t) \rbrace$ has a well-defined stationary distribution.
Despite of the simplicity of the model, analytic characterizations of the stationary state, including the stationary spiking rate, 
are hindered by the fact that the law of the point process of spike receptions is not known.

To circumvent this hindrance, the RMF setting proposes to compute stationary spiking rates in 
infinite networks that are closely related to the original finite-size networks.
Informally, the counting-model RMF is constructed as follows:
for a $K$-neuron counting model and for an integer $M>0$, the $M$-replica model consists of $M$ replicas,
each comprising $K$ counting neurons. 
Upon spiking, a neuron $i$ in replica $m$, indexed by $(m,i)$, delivers 
spikes with synaptic weight $\mu$ to the $K-1$ neurons $(v_j,j)$, $j\neq i$, where the replica destination $v_j$ is chosen uniformly at random for all $j$. 
RMF networks are defined in the limit of an infinite number of replicas, namely infinite $M$ but fixed and finite $K$.
The Poisson Hypothesis then states that the dynamics of replicas become asymptotically independent in the limit $M \to \infty$, 
and that each neuron receives spikes from independent Poisson point processes. It is shown in \cite{Baccelli_2019} that
as a consequence of this Poisson property, 
%
the stationary state is characterized by
a single ODE bearing on $G$, the probability-generating function (PGF) of a neuron count $C$:
\begin{eqnarray}\label{eq:intro}
 \beta \!- \!\mu z G'(z) \!+\! \big(\beta (K\!-\!1)(z\!-\!1)\!-\!b \big)G(z) \!=\!0    \, .
\end{eqnarray} 
The simplifications warranted by the Poisson Hypothesis in the above ODE characterization comes at the cost of introducing the spiking rate $\beta$
as an unknown parameter in \eqref{eq:intro}. 
As the ODE \eqref{eq:intro} is otherwise analytically tractable, characterizing the RMF stationary state amounts to specifying the unknown firing rate $\beta$. 
Then, the challenge of the RMF approach consists in specifying the unknown firing rate via purely analytical considerations about a parametric system of ODEs.
It turns out that requiring that the solution of \eqref{eq:intro} be analytic, as any PGF shall be, is generally enough to exhibit 
self-consistent relations about the stationary rates.
For instance, the RMF stationary spiking rate $\beta$ of the RMF counting model is shown to be determined as the unique solution of  
\begin{eqnarray}\label{eq:selfCons}
\beta = \frac{\mu c^a e^{-c}}{\gamma (a,c)} \quad \mathrm{with} \quad a = \frac{(K-1)\beta+b}{\mu} \quad \mathrm{and} \quad c = \frac{(K-1)\beta}{\mu} \, ,
\end{eqnarray}
where $\gamma$ denotes the lower incomplete Euler Gamma function.

We have shown that the above approach generalizes to networks with continuous state space, heterogeneous network structures \cite{Baccelli_2019}, and including pairwise correlations \cite{Baccelli_2020}. 
In all cases, the Poisson Hypothesis is the cornerstone of a computational treatment.
As a key step toward establishing the Poisson Hypothesis for continuous-time network dynamics, the goal of this work is to prove it for 
a broad class of discrete-time dynamics, which we refer to as fragmentation-interaction-aggregation processes.

\subsection*{Fragmentation-interaction-aggregation processes}

In FIAPs, agents are graph nodes endowed with a state that evolves over time. 
The nodes are coupled via point processes which model punctuate interactions.
Specifically, each node's state evolves in response to its input point process, and generates an output point process in a state-dependent manner.
In all generality, the transformation of input into output point process can be viewed as a random map. 
In FIAPs, this map is defined through the following dynamics:
$(i)$ The \emph{fragmentation} process is triggered by local {\em activation} events
taking place on each node and which occur with
a probability that depends on the state of the node.
$(ii)$ Each fragmentation event in turn triggers \emph{interactions} between
the nodes by creating input events in the neighboring nodes.
$(iii)$ Finally, the \emph{aggregation} process consists in the integration of the input point processes to the states of each node.

Thus broadly defined, FIAPs offer a simple albeit general framework to analyze the phenomena alluded to above.
The precise definition of FIAPs is given as follows:
\begin{definition}
\label{def_FIAP}
An instance of the class $\Cset$ of discrete
\textit{fragmentation-interaction-aggregation processes} 
is determined by:
\begin{itemize}
\item An integer $K$ representing the number of nodes;
\vspace{-5pt}
\item A collection of initial conditions for the integer-valued state variables at step zero, which we denote by $\{X_{i}\}$, where $i \in \{1,\ldots,K\}$; 
\vspace{-5pt}
\item A collection of fragmentation random variables $\{U_{i}\}$, which are i.i.d. uniform in $[0,1]$ and independent from $\{X_{i}\}$, where $i \in \{1,\ldots,K\}$; 
\vspace{-5pt}
\item A collection of {\em fragmentation functions} $\{g_{1,i} : \mathbb{N} \rightarrow \mathbb{N}\}_{i \in \{1,\ldots,K\}}$ and $\{g_{2,i} : \mathbb{N} \rightarrow \mathbb{N}\}_{i \in \{1,\ldots,K\}}$; 
\vspace{-5pt}
\item A collection of bounded {\em interaction functions} $\{h_{i,j} : \mathbb{N} \rightarrow \mathbb{N}\}_{i,j \in \{1,\ldots,K\}}$; 
\vspace{-5pt}
\item A collection of {\em activation probabilities} $\{\sigma_i(0), \sigma_i(1),\ldots\}_{i\in \{1,\ldots,K\}}$ verifying the 
conditions $\sigma_i(0)=0$,  and
$0<\sigma_i(1)\le \sigma_i(2)\le \cdots\le 1$ for all $i$.
\end{itemize}
The associated dynamics take as input 
the initial integer-valued state variables $\{X_{i}\}$
and define the state variables at the next step as
\begin{equation}
\label{eq_gendef_1}
Y_{i}=g_{1,i}(X_{i})\one_{\{U_{i}<\sigma_i(X_{i})\}}+g_{2,i}(X_{i})\one_{\{U_{i}>\sigma_i(X_{i})\}}+A_{i}, \quad \forall i=1,\ldots,K,
\end{equation}
with arrival processes 
\begin{equation}
\label{eq_gendef_2}
A_{i}=\sum_{j \neq i} h_{i,j}(X_{j})\one_{\{U_{j}<\sigma_j(X_{j})\}},
\quad \forall i=1,\ldots,K.
\end{equation}
\end{definition}

The interpretation is as follows: node $i$ activates with
probability $\sigma_i(k)$ if its state $X_i$ is equal to $k$.
The state of this node is fragmented to $g_{1,i}(k)$ upon activation and to $g_{2,i}(k)$ otherwise.
This activation triggers an input of $h_{j,i}(k)$ units to node $j$.
Hence, the interaction functions encode the structure of the graph.
The variable $A_i$ gives the total number of arrivals to
node $i$. This variable is aggregated to the state of the
node as seen in \eqref{eq_gendef_1}.
Note that considering $\sigma_i(0)=0$ for all $i$ ensures that state variables in state 0 cannot be fragmented.

The FIAP class $\Cset$ encompasses many network dynamics relevant to queuing theory and mathematical biology. 
For example, taking $g_{1,i}(k)=k-1, g_{2,i}(k)=k$ and $h_{i,j}(k)=\one_{\{i=j+1 \mod K\}}$, we recover an instance 
of Gordon-Newell queuing networks \cite{Kleinrock_2}. 
Taking $g_{1,i}(k)=0, g_{2,i}(k)=k$ and $h_{i,j}(k)=\mu_{i,j}\in \mathbb{N}$ defines a discrete instance of Galves-L\"{o}cherbach dynamics for neural 
networks introduced above.
Taking $g_{1,i}(k)=\lfloor \frac k 2 \rfloor$ and $g_{2,i}(k)=k+1$, corresponds to aggregation-fragmentation processes modelling, e.g., 
TCP communication networks \cite{baccelli:2002}.
The class $\Cset$ also includes certain discrete time Hawkes processes. Namely, if for each coordinate of a Hawkes vector process, 
we define its state as the sum over time of all its variations, then all discrete Hawkes processes that are Markov with respect to their 
so-defined state are in $\Cset$. 
Thus, the results of the present paper have potential computational implications in a wide set of application domains 
beyond the neural network setting used above to illustrate them. 

The present paper is focused on discrete time versions of this type of dynamics as in, e.g., \cite{seol2015limit} \cite{Cessac_2007}; note that 
continuous instances were also considered in the literature such as in \cite{Delattre}, \cite{Masi2015HydrodynamicLF}.

\subsection*{Replica models for fragmentation-interaction-aggregation networks}

Finite RMF models are defined as a coupling of replicas of the network of interest by randomized routing decisions.
For a FIAP, the state of its $M$-replica model is thus specified by a collection of state variables $X^{M}_{m,i}$, where $m$ is the index of the replica and $i$ corresponds to the index of the node in the original network.
Instead of interacting with nodes within the same replica, an activated node $i$ in replica $m$ interacts with a downstream node $j$ from a replica $n$ chosen uniformly at random and independently.
This randomization preserves essential features of the original dynamics such as the magnitude of interactions between nodes but degrades the dependence structure between nodes.
Indeed, over a finite period of time, the probability for a particular node to receive
an activation from another given node scales as $1/M$.
Thus, as the number of replicas increases, interactions between distinct replicas become ever scarcer, intuitively leading to replica independence when $M \to \infty$. 
This asymptotic independence is the root of RMF computational tractability.


Here is the precise definition of the finite-replica version of a FIAP:

\begin{definition}
For any process in $\Cset$, the associated $M$-replica dynamics is entirely specified by 
\begin{itemize}
\item A collection of initial conditions for the integer-valued state variables at step zero, which we denote by $\{X^{M}_{n,i}\}$, where $n \in \{1,\ldots,M\}$ and $i \in \{1,\ldots,K\}$, such that for all $M, n$ and $i$, $X^M_{n,i}=X_i$; 
\vspace{-5pt}
\item A collection of fragmentation random variables $\{U_{n,i}\}$, which are i.i.d. uniform in $[0,1]$ and independent from $\{X^{M}_{n,i}\}$, where $n \in \{1,\ldots,M\}$ and $i \in \{1,\ldots,K\}$; 
\vspace{-5pt}
\item A collection of i.i.d. \textit{routing} random variables $\{R^M_{m,j,i}\}$ independent from $\{X^{M}_{n,i}\}$ and $\{U_{n,i}\}$, uniformly distributed on  $\{1,\ldots,M\}\setminus\{m\}$ for all $i,j \in \{1,\ldots,K\}$ and $m \in \{1,\ldots,M\}$. In other words, if $R^M_{m,j,i}=n$, then an eventual activation of node $j$ in replica $m$ at step 0 induces an arrival of size $h_{i,j}(X^M_{m,j})$ in node $i$ of replica $n$, and $n$ is chosen uniformly among replicas and independently from the state variables. Note that these variables are defined regardless of the fact that an activation actually occurs.
Also note that for $i'\ne i$, the activation in question will 
induce an arrival in node $i'$ of replica $n'$, with $n'$
sampled in the same way but independently of $n$.
\end{itemize}
Then, the integer-valued state variables at step one, denoted by $\{Y^{M}_{n,i}\}$, are given by the $M$-RMF equations
\begin{equation}
\label{eq_gen_rmf_1}
Y^{M}_{n,i}=g_{1,i}(X^{M}_{n,i})\one_{\{U_{n,i}<\sigma_i(X^{M}_{n,i})\}}+g_{2,i}(X^{M}_{n,i})\one_{\{U_{n,i}>\sigma_i(X^{M}_{n,i})\}}+A^M_{n,i},
\end{equation}
where $g_{1,i}$, $g_{2,i}$ denotes fragmentation functions, $\sigma_i$ denotes activation probabilities, and where
\begin{equation}
\label{eq_gen_rmf_2}
A^M_{n,i}=\sum_{m \neq n}\sum_{j \neq i} h_{i,j}(X^M_{m,j})\one_{\{U_{m,j}<\sigma_i(X^{M}_{m,j})\}}\one_{\{R^M_{m,j,i}=n\}}
\end{equation}
is the number of arrivals to node $i$ of replica $n$ via the interaction functions $h_{i,j}$. 
\end{definition}


RMF models are only expected to become tractable when individual replicas become independent.
This happens in the limit of an infinite number of replicas, i.e., in the so-called {\em RMF limit} \cite{Baccelli_2019}.
In this RMF limit, asymptotic independence between replicas follows from the more specific Poisson Hypothesis. 
The Poisson Hypothesis states that spiking deliveries to distinct replicas shall be asymptotically distributed as independent Poisson (or compound) point processes.
Such a hypothesis, which has been numerically validated for certain RMF networks, has been conjectured for linear Galves-L\"{o}cherbach dynamics in \cite{Baccelli_2019}.
Proving the validity of the Poisson Hypothesis for the RMF limits of the much more general FIAPs is the purpose of the present work.

\subsection*{Methodology for proving the Poisson Hypothesis}
Classical mean-field approximations of a given network are obtained by considering the limit of the original network when a certain characteristic of the network – typically the number of nodes – goes to infinity. When the dynamics of the nodes are synchronous, one gets a discrete time dynamical system. The term mean-field comes from the fact that in such network limits, the effect that individual nodes have on one another are approximated by a single averaged effect, typically an empirical mean. In the limit, this empirical mean usually converges to an expectation term through a \textit{propagation of chaos} result \cite{Szn:89} which leads to analytical tractability. In replica mean-fields, there is no such empirical mean over the nodes of the network; the mean-field simplification comes from the random routing operations between replicas. The input point process in the $M$-replica model consists in a superposition of $M$ rare point processes, which informally explains why Poisson (or compound Poisson) processes arise at the limit.
For classical mean-fields, different techniques have been developed to prove the existence and the convergence to the mean-field limit. 
Standard techniques include the use of the theory of nonlinear Markov processes \cite{Vladimirov_2018} and stochastic approximation algorithms\cite{Benaim_LeBoudec} for continuous time dynamics, and induction techniques
which assume the existence of limits at time zero and extend the result by induction \cite{LeBoudec07} for dynamics in discrete time. Refinements to the latter approach can be made in order to obtain explicit rates of convergence \cite{Gast_2018}.
The approach developed for the RMF case belongs in spirit to the third class of techniques. We suppose that the property of asymptotic independence (see Definition \ref{def_pai_1}) holds for the state variables at time zero. We then prove that this property is preserved by the dynamics of the $M$-replica model and thus holds by induction for any finite time. Thus, we focus hereafter on the one-step transition of the model from time 0 to time 1. We show that this asymptotic independence hypothesis implies both convergence in distribution and an ergodic type property that we call the \textit{triangular law of large numbers}. We apply this law of large numbers to the input process to a single node to show that Poisson (or compound Poisson) processes appear in the replica mean-field limit indeed. Let us stress that this proof is by induction. The fact that the main difficulty 
consists in proving the induction step should not hide the fact that the result relies in crucial way on the assumption that at step 0, the initial
state variables satisfy the asymptotic independence property. Whether the result can be extended to more general initial conditions 
is an open question at this stage.

\subsection*{Structure of the Paper}

For the sake of clarity in exposition, we start with the proof of the Poisson Hypothesis for the special case of neural networks
first before extending it to general FIAPs.
More precisely, we first consider the symmetric neural network case, which is a fully symmetric Galves-L\"{o}cherbach model \cite{Galves_2013} in discrete time. We introduce the model in Section \ref{sec:sglm} and prove the Poisson Hypothesis 
in Section \ref{sec:thepr}.
We then extend the proof to the class of FIAPs defined above. 
We first consider
the symmetric case in Section \ref{sec:sfiap} and then the general case in Section \ref{sec:gfiap}.
Finally, some extensions are discussed in Section \ref{sec:ext}.

\section{The symmetric Galves-L\"{o}cherbach model}
\label{sec:sglm}
\subsection{The symmetric RMF network model}
We consider a network of $K$ spiking neurons. We suppose that the behavior of each neuron is determined by a random variable representing the membrane potential of the neuron. Each neuron spikes at a rate depending on its state variable. 
Let $X=\{X_{i}\}$ be the integer-valued state variables at step 0, where  $i \in \{1,\ldots,K\}.$ Let $Y=\{Y_{i}\}$ be the integer-valued state variables at time one. The system continues to evolve in discrete time with all corresponding state variables defined by induction.

Let $\sigma : \mathbb{N} \rightarrow [0,1]$ be the spiking probabilities of the neurons. Namely, $\sigma(k)$ is the probability that a neuron in state $k$ spikes. We consider that $\sigma(0)=0$, accounting for the fact that a neuron in state 0 never spikes. We also consider that $\sigma(1)>0$ and that $\sigma$ is non-decreasing. Let $\{U_{i}\}$ be uniformly distributed i.i.d. random variables independent from $\{X_{i}\}$.  We then write the following evolution equation for the state of the system:
\begin{equation}
\label{eq_def1}
Y_{i}=\one_{\{U_{i}>\sigma(X_{i})\}} X_{i}+A_{i},
\end{equation}
where
\begin{equation}
\label{eq_def2}
A_{i}=\sum_{j \neq i} \one_{\{U_{j}<\sigma(X_{j})\}}
\end{equation}
is the number of arrivals to neuron $i$.

Here, the fragmentation is complete if $U_{i}<\sigma(X_{i})$, 
namely if there is a spike, in which case the state variable 
is reset (jumps to 0). Otherwise there is no fragmentation at all and the state variable is left unchanged. In both cases, the arrivals $A_i$ are aggregated to the state. 

The RMF model described below is a discrete time version of the model introduced in \cite{Baccelli_2019}. Namely, we consider a collection of $M$ identically distributed replicas of the initial set of $K$ neurons. Let $X=\{X^{M}_{n,i}\}$ be the integer-valued state variables at step 0, where  $n \in \{1,\ldots,M\}, i \in \{1,\ldots,K\}.$ Let $Y=\{Y^{M}_{n,i}\}$ be the integer-valued state variables at time one. Let $U=\{U_{n,i}\}$ be uniformly i.i.d. random variables in $[0,1]$ independent from $\{X^{M}_{n,i}\}$. 
Let $R=\{R^M_{m,i,j}\}$ be i.i.d. \textit{routing} random variables independent from $\{X^{M}_{n,i}\}$ and $\{U_{n,i}\}$, uniformly distributed on  $\{1,\ldots,M\}\setminus\{m\}$ for all $i,j \in \{1,\ldots,K\}$ and $m \in \{1,\ldots,M\}$. 
The replica model has the following evolution equation:

\begin{equation}
\label{eq_rmf1}
Y^{M}_{n,i}=\one_{\{U_{n,i}>\sigma(X^{M}_{n,i})\}} X^{M}_{n,i}+A^M_{n,i},
\end{equation}
where
\begin{equation}
\label{eq_rmf2}
A^M_{n,i}=\sum_{m \neq n}\sum_{j \neq i} \one_{\{U_{m,j}<\sigma(X^{M}_{m,j})\}}\one_{\{R^M_{m,j,i}=n\}}
\end{equation}
is the number of arrivals to neuron $i$ of replica $n$.


\subsection{Pairwise asymptotic independence and consequences}

Our goal is to show the propagation of chaos and the Poisson Hypothesis in this system. In other words, we want to show that the arrivals to two distinct replicas are asymptotically independent and the number of arrivals to one replica is asymptotically Poisson distributed. We begin by considering the fully exchangeable case with equal weights, but we will consider the general case later. In order to do so, we choose to characterize the propagation of chaos through the following properties:

\begin{definition}
\label{def_pai_1}
Given $M \in \mathbb{N}$, given an array of integer-valued random variables $Z=\{Z^{M}_{n,i}\}_{1 \leq n \leq M, 1 \leq i \leq K}$ such that for all fixed $M$, the random variables $Z^M_{n,i}$ are exchangeable in $n$ and $i$, we say that the variables $Z^M_{n,i}$ are \textit{pairwise asymptotically independent}, which we will denote $\PAI(Z)$, if there exists an integer-valued random variable $\tilde{Z}$ such that for all $(n,i) \neq (m,j)$, for all $u,v \in [0,1]$,
\begin{equation}
\label{eq_pai1}
\lim_{N \rightarrow \infty}\Esp[u^{Z^N_{n,i}}v^{Z^N_{m,j}}] =\Esp[u^{\tilde{Z}}]\Esp[v^{\tilde{Z}}].
\end{equation}

\end{definition}

\begin{definition}
\label{def_tlln_1}
Given $M \in \mathbb{N}$, given an array of integer-valued random variables $Z=\{Z^{M}_{n}\}_{n \in \{1,\ldots,M\}}$ such that for all fixed $M$, the random variables $Z^M_{n}$ are exchangeable in $n$, we say that $Z$ verifies the \textit{triangular law of large numbers}, denoted by $\TLLN(Z)$, if there exists an integer-valued random variable $\tilde{Z}$ such that for all functions $f:\mathbb{N}\rightarrow \mathbb{R}$ with compact support, we have the following limit in $L^2$:

\begin{equation}
\label{eq_tlln1}
\lim_{N\rightarrow\infty}\frac{1}{N}\sum_{n=1}^N f(Z_{n}^N)=\Esp[f(\tilde{Z})].
\end{equation} 

\end{definition}

Here are a few remarks about these definitions. First, note that if an array of random variables $Z$ satisfies $\PAI(Z)$, then for all $n$ and $i$, $Z^M_{n,i}$ converges in distribution to $\tilde{Z}$ as $M \rightarrow \infty$. This can be seen by taking $v=1$ in the definition. 
By considering the case where $Z^M_{n}=Z^1_1$ for all $n$ and $M$, we see that the convergence in distribution of $Z^M_{n}$ does not imply $\TLLN(Z)$. However, we show below that for all arrays of random variables $Z=\{Z^{M}_{n,i}\}_{n \in \{1,\ldots,M\}, i \in \{1,\ldots,K\}}$ satisfying $\PAI(Z)$, for all $i$, $Z_i=\{Z^{M}_{n,i}\}_{n \in \{1,\ldots,M\}}$ satisfies $\TLLN(Z_i)$. In other words, pairwise asymptotic independence of an array of random variables implies that these random variables verify the triangular law of large numbers. Finally, note that an array of integer-valued random variables $Z$ satisfies $\PAI(Z)$ iff the random variables are asymptotically independent in the sense that for all $(n,i) \neq (m,j)$
\begin{equation}
\label{eq_ai_equiv}
\P(Z^M_{n,i} \in B_1, Z^M_{m,j} \in B_2) \rightarrow \P(\tilde{Z} \in B_1)\P(\tilde{Z} \in B_1)
\end{equation}
when $M \rightarrow \infty$ for $B_1, B_2 \in \mathcal{B}(\mathbb{R})$.

The following characterization of $L^2$ convergence will be used throughout this paper:
\begin{lemma}
\label{lem_L2}
Let $(X_n)$ be random variables with finite second moments. Then there exists a constant $c$ such that $X_n \rightarrow c$ in $L^2$ when $n \rightarrow \infty $ iff
\begin{enumerate}
\item $\Esp[X_n] \rightarrow c$ when $n \rightarrow \infty$
\item $\var(X_n) \rightarrow 0$ when $n \rightarrow \infty.$
\end{enumerate}
\end{lemma}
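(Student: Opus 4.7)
The approach is to reduce the equivalence to a single algebraic identity and then exploit non-negativity. Specifically, I would first establish the standard decomposition
\[
\Esp[(X_n - c)^2] = \var(X_n) + (\Esp[X_n] - c)^2,
\]
obtained by expanding the square and using $\Esp[X_n^2] = \var(X_n) + (\Esp[X_n])^2$. This identity requires only the finiteness of the second moments, which is part of the hypothesis.

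For the forward implication, if $X_n \to c$ in $L^2$, then by definition the left-hand side tends to $0$. Since both $\var(X_n)$ and $(\Esp[X_n] - c)^2$ are non-negative, each of them must tend to $0$ individually. Taking square roots of the second term yields $\Esp[X_n] \to c$, and the first term gives condition (2) directly.

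For the reverse implication, if conditions (1) and (2) hold, then $\var(X_n) \to 0$ and $(\Esp[X_n] - c)^2 \to 0$ (by continuity of the square), so their sum $\Esp[(X_n - c)^2]$ tends to $0$ by additivity of limits. This is exactly $L^2$ convergence of $X_n$ to the constant $c$.

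There is no real obstacle here: the lemma is a textbook decomposition, and the whole content is the bias-variance identity above. I would keep the write-up to a few lines, since the main use of this lemma in the sequel is to convert statements about convergence in $L^2$ (such as the $\TLLN$ in Definition \ref{def_tlln_1}) into the easier bookkeeping of a mean converging to a target and a variance vanishing.
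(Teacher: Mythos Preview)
Your proof is correct and is exactly the natural elaboration of what the paper does: the paper simply states ``This follows directly from the definition of $L^2$ convergence,'' and your bias-variance decomposition $\Esp[(X_n-c)^2]=\var(X_n)+(\Esp[X_n]-c)^2$ is precisely the one-line identity that makes this immediate.
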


This follows directly from the definition of $L^2$ convergence.

The following lemma describes the relation between pairwise asymptotic independence and the triangular law of large numbers.

\begin{lemma}
\label{lem_pai_tlln}
Let $M \in \mathbb{N}$, let $Z=\{Z^{M}_{n,i}\}_{n \in \{1,\ldots,M\}, i \in \{1,\ldots,K\}}$ be an array of integer valued random variables verifying $\PAI(Z)$. Then, for all $i$, $Z_i=\{Z^{M}_{n,i}\}_{n \in \{1,\ldots,M\}}$ satisfies $\TLLN(Z_i)$. 
\end{lemma}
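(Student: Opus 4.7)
The plan is to apply Lemma \ref{lem_L2} to $S_N := \frac{1}{N}\sum_{n=1}^N f(Z^N_{n,i})$ for an arbitrary $f : \mathbb{N} \to \mathbb{R}$ with compact support, say $\supp f \subseteq \{0,\dots,K_0\}$ with $\|f\|_\infty \le C$. To use the lemma, I need $\Esp[S_N]\to \Esp[f(\tilde Z)]$ and $\var(S_N)\to 0$, and both will be extracted from $\PAI(Z)$ together with the exchangeability of $(Z^N_{n,i})_{n}$ at fixed $i$.

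The first ingredient is that $\PAI(Z)$ implies joint convergence in distribution of pairs. Setting $v=1$ in \eqref{eq_pai1} gives $\Esp[u^{Z^N_{n,i}}] \to \Esp[u^{\tilde Z}]$ for every $u\in[0,1]$; by the standard PGF continuity theorem for $\mathbb{N}$-valued variables, $Z^N_{n,i} \overset{\mathcal{D}}{\to} \tilde Z$. Applied with $u,v\in[0,1]$ jointly, \eqref{eq_pai1} gives $(Z^N_{n,i}, Z^N_{m,i}) \overset{\mathcal{D}}{\to} (\tilde Z_1, \tilde Z_2)$ where $\tilde Z_1,\tilde Z_2$ are independent copies of $\tilde Z$, for any $n\ne m$. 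Since $f$ is bounded, the continuous mapping / bounded convergence theorem then yields
\begin{equation*}
\Esp[f(Z^N_{n,i})] \longrightarrow \Esp[f(\tilde Z)], \qquad \Esp[f(Z^N_{n,i})f(Z^N_{m,i})] \longrightarrow \Esp[f(\tilde Z)]^2.
\end{equation*}

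For the mean, exchangeability in $n$ gives $\Esp[S_N] = \Esp[f(Z^N_{1,i})] \to \Esp[f(\tilde Z)]$. For the variance, I expand
\begin{equation*}
\var(S_N) = \frac{1}{N^2}\sum_{n=1}^{N}\var\bigl(f(Z^N_{n,i})\bigr) + \frac{1}{N^2}\sum_{n\ne m}\cov\bigl(f(Z^N_{n,i}),f(Z^N_{m,i})\bigr),
\end{equation*}
and use exchangeability to collapse the sums. The diagonal contribution is at most $\|f\|_\infty^2/N \le C^2/N \to 0$. The off-diagonal contribution equals $\tfrac{N-1}{N}\cov(f(Z^N_{1,i}), f(Z^N_{2,i}))$, and the two previous displays force this covariance to $0$. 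Hence $\var(S_N)\to 0$, and Lemma \ref{lem_L2} concludes that $S_N \to \Esp[f(\tilde Z)]$ in $L^2$, which is exactly $\TLLN(Z_i)$.

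The only nontrivial point is the translation from the pointwise PGF convergence in Definition \ref{def_pai_1} to the convergence of expectations of compactly supported test functions needed above; this is routine once one observes that $\{0,\dots,K_0\}$ is finite so each $f\one_{\{\cdot\le K_0\}}$ is a finite linear combination of indicators of points, and indicator functions on a finite set are handled by the joint PGF convergence (or equivalently by \eqref{eq_ai_equiv}). I expect no genuine obstacle; the argument is essentially a variance bound for a row-wise exchangeable triangular array whose pair correlations vanish in the limit.
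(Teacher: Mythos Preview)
Your proof is correct and follows essentially the same route as the paper's: both apply Lemma \ref{lem_L2}, use exchangeability to reduce to a single variance and a single covariance term, bound the diagonal by $\|f\|_\infty^2/N$, and kill the off-diagonal covariance via $\PAI(Z)$. The only cosmetic difference is that the paper first treats indicator functions $f=\one_B$ directly via \eqref{eq_ai_equiv} and then extends by linearity, whereas you phrase the same step as joint convergence in distribution of $(Z^N_{1,i},Z^N_{2,i})$ deduced from the bivariate PGF; these are equivalent for integer-valued variables.
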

\begin{proof}

Let $f : \mathbb{N} \rightarrow \mathbb{R}$ be a function with compact support. We use Lemma \ref{lem_L2}. We fix $i \in \{1,\ldots,K\}$ that we omit in the rest of the proof. We have 
\begin{equation*}
\begin{split}
\var\left(\frac{1}{M}\sum_{n=1}^M f(Z^M_n)\right)
&=\frac{1}{M^2}\left(\sum_{n=1}^M \var\left(f(Z^M_n)\right)+\sum_{p \neq q}\cov[f(Z^M_p),f(Z^M_q)]\right) \\
&=\frac{1}{M}\var\left(f(Z^M_1)\right)+\frac{M(M-1)}{M^2}\cov[f(Z^M_1),f(Z^M_2)],
\end{split}
\end{equation*}
the last equality holding by exchangeability between replicas.
Both terms on the right hand side go to 0 when $M \rightarrow \infty$. For the first term, this follows from the boundedness of $f$. For the second, we first show the result for indicator functions. Let $B \in \mathcal{B}(\mathbb{R})$ and let $f$ be defined by $f(n)=\one_{\{n\in B\}}$. Then we have
\begin{equation}
\label{eq_cov1}
\cov[f(Z^M_1),f(Z^M_2)]=\P(Z^M_1\in B,Z^M_2\in B)-\P(Z^M_1\in B)\P(Z^M_2\in B),
\end{equation}
which goes to 0 when $M \rightarrow \infty$ by $\PAI(Z)$.
This immediately extends to functions with compact support since they only take a finite number of values.
Moreover, for all such functions, $\Esp[\frac{1}{M}\sum_{n=1}^M f(Z^M_n)] \rightarrow \Esp[f(\tilde{Z})]$ when $M \rightarrow \infty$ as a direct consequence of the fact that for integer-valued random variables, convergence in distribution of $Z^M$ to $\tilde{Z}$ is equivalent to the convergence $\P(Z^M=k) \rightarrow \P(\tilde{Z}=k)$ for all $k \in \mathbb{N}$. This concludes the proof.
\end{proof}

For our subsequent needs, we also establish the following result: we show that pairwise asymptotic independence implies a property that is slightly more general than the triangular law of large numbers, where we allow the function $f$ to depend on an array of i.i.d. random variables $U=\{U_{n,i}\}_{n \in \{1,\ldots,M\}, i \in \{1,\ldots,K\}}$, independent from the rest of the dynamics.

\begin{lemma}
\label{lem_gen_tlln}
Let $M \in \mathbb{N}$, let $Z=\{Z^{M}_{n,i}\}_{n \in \{1,\ldots,M\}, i \in \{1,\ldots,K\}}$ be an array of integer valued random variables verifying $\PAI(Z)$. Then for all bounded functions $f : \mathbb{N}\times [0,1] \rightarrow \mathbb{R}$ with compact support, for all i.i.d. sequences of random variables $U=\{U_{n,i}\}_{n \in \{1,\ldots,M\}, i \in \{1,\ldots,K\}}$ independent from $Z$, there exists $\tilde{U}$ independent from $\tilde{Z}$ and $Z$ such that, for all $i \in \{1,\ldots,K\}$, we have the following limit in $L^2$:
\begin{equation}
\label{eq_tlln2}
\lim_{M\rightarrow\infty}\frac{1}{M}\sum_{n=1}^M f(Z_{n,i}^M, U_{n,i})=\Esp[f(\tilde{Z},\tilde{U})].
\end{equation}
\end{lemma}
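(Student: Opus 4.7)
The plan is to follow the same blueprint as the proof of Lemma \ref{lem_pai_tlln}: invoke Lemma \ref{lem_L2}, compute the mean and the variance of the empirical average using exchangeability in $n$, and then show that each of the two resulting terms vanishes. Exchangeability in $n$ for the pairs $(Z^M_{n,i}, U_{n,i})$ is preserved because $Z$ is exchangeable in $n$ by assumption and $\{U_{n,i}\}$ is i.i.d.\ and independent of $Z$. Define $\tilde{U}$ simply as an independent copy of $U_{1,1}$ (equivalently, of any $U_{n,i}$), taken to be independent of $\tilde{Z}$.

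The key structural idea is to exploit the compact support of $f$ in the integer variable to decompose it. Let $K_0$ be such that $f(k,\cdot)\equiv 0$ for $k>K_0$, and write $f(k,u)=\sum_{k=0}^{K_0} \one_{\{n=k\}} f_k(u)$ with $f_k(u)=f(k,u)$. Then using the independence of the $U_{n,i}$ from $Z$, the expectation factorizes:
\begin{equation*}
\Esp\!\left[\frac{1}{M}\sum_{n=1}^M f(Z^M_{n,i}, U_{n,i})\right] = \Esp[f(Z^M_{1,i}, U_{1,i})] = \sum_{k=0}^{K_0} \P(Z^M_{1,i}=k)\, \Esp[f_k(U_{1,i})].
\end{equation*}
By Definition \ref{def_pai_1} (taking $v=1$), $Z^M_{1,i} \to \tilde{Z}$ in distribution, hence $\P(Z^M_{1,i}=k)\to \P(\tilde{Z}=k)$ for each $k$, and the sum converges to $\sum_k \P(\tilde{Z}=k)\Esp[f_k(\tilde{U})]=\Esp[f(\tilde{Z},\tilde{U})]$.

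For the variance, exchangeability yields
\begin{equation*}
\var\!\left(\frac{1}{M}\sum_{n=1}^M f(Z^M_{n,i}, U_{n,i})\right) = \frac{1}{M}\var(f(Z^M_{1,i}, U_{1,i})) + \frac{M-1}{M}\cov[f(Z^M_{1,i}, U_{1,i}), f(Z^M_{2,i}, U_{2,i})].
\end{equation*}
The first term vanishes because $f$ is bounded. For the covariance, I decompose $f$ as above and condition on $Z$, using that $U_{1,i}$ and $U_{2,i}$ are independent of each other and of $Z$. This yields
\begin{equation*}
\cov[f(Z^M_{1,i}, U_{1,i}), f(Z^M_{2,i}, U_{2,i})] = \sum_{k,l=0}^{K_0} \bigl[\P(Z^M_{1,i}=k, Z^M_{2,i}=l) - \P(Z^M_{1,i}=k)\P(Z^M_{2,i}=l)\bigr]\,\Esp[f_k(U_{1,i})]\Esp[f_l(U_{2,i})].
\end{equation*}
Each bracket vanishes as $M\to\infty$ by the event-based reformulation of $\PAI(Z)$ in \eqref{eq_ai_equiv}, and the sum is finite, so the covariance vanishes.

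The main (minor) obstacle is bookkeeping rather than conceptual: one must confirm that conditioning on $Z$ correctly factors out the $U$-dependence in every cross term, and that the finite support in the integer argument is what allows the replacement of the continuous-mapping-type argument by a finite sum of indicator convergences, exactly as in Lemma \ref{lem_pai_tlln}. No new probabilistic input is needed beyond PAI and the independence of $U$ from $Z$.
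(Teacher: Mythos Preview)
Your proof is correct and follows essentially the same approach as the paper: invoke Lemma~\ref{lem_L2}, use exchangeability in $n$ to reduce the mean and variance to single- and two-term expressions, and then let $\PAI(Z)$ and the boundedness of $f$ kill both. The only cosmetic difference is that you decompose $f$ as a finite sum $\sum_{k\le K_0}\one_{\{\cdot=k\}}f_k(u)$ from the start, whereas the paper first treats product indicators $\one_{\{n\in B\}}\one_{\{t\in C\}}$ and then extends; your version is slightly more direct but the idea is the same.
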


Note that compared to Definition \ref{def_tlln_1}, we consider that the functions are bounded, a condition that was automatically fulfilled for functions with compact support on $\mathbb{N}$.

\begin{proof}
We proceed as in the last lemma, conditioning on the $U_{n,i}$ when necessary.
Let $M \in \mathbb{N}$, let $i \in \{1,\ldots,K\}$. We will omit this index in the rest of the proof.
By exchangeability between replicas, defining $\tilde{U}=U_1$, we have
\begin{equation*}
\Esp\left[\frac{1}{M}\sum_{n=1}^M f(Z^M_{n}, U_n)\right] =\Esp[f(Z^M_1,U_1)] 
=\Esp[f(Z^M_1,\tilde{U})]
\end{equation*}
Since $Z^M_1$ converges in distribution to $\tilde{Z}$ when $M \rightarrow \infty$, and since $Z^M_1$ is integer-valued and $f$ is bounded, for all $u \in [0,1], \Esp[f(Z^M_1,u)] \rightarrow \Esp[f(\tilde{Z},u)]$ when $ M \rightarrow \infty$.Therefore, since $\tilde{U}$ is independent from $Z$ and $\tilde{Z}$,
$\Esp[f(Z^M_1,\tilde{U})] \rightarrow \Esp[f(\tilde{Z},\tilde{U})]$ when $M \rightarrow \infty$ a.s..
Finally, 
\begin{equation}
\label{eq_tlln3}
\Esp\left[\frac{1}{M}\sum_{n=1}^M f(Z^M_{n},U_n)\right] \rightarrow \Esp\left[f(\tilde{Z},\tilde{U})\right]
\end{equation}
when $M \rightarrow \infty.$
Moreover, 
\begin{eqnarray*}
& & \hspace{-1.5cm}\var\left(\frac{1}{M}\sum_{n=1}^M f(Z^M_{n}, U_n)\right) \\
& = & \frac{1}{M^2}\sum_{n=1}^M \var\left(f(Z^M_n,U_n)\right) 
+\frac{1}{M^2}\sum_{n\neq n'}\cov\left[f(Z^M_n,U_n),f(Z^M_{n'},U_{n'})\right]\\
&=&\frac{1}{M}\var\left(f(Z^M_1,U_1)\right)
+\frac{M(M-1)}{M^2}\cov\left[f(Z^M_1,U_1),f(Z^M_2,U_2)\right],
\end{eqnarray*}
the last equality stemming from exchangeability between replicas.
When $M \rightarrow \infty$, the first term goes to 0 because $f$ is bounded. For the second term, since the $\{Z^M_n\}$ and the $\{U_n\}$ are independent and the $\{U_n\}$ are i.i.d., we can proceed as above. Namely, let $B, C \in \mathcal{B}(\mathbb{R})$. Let $f$ be defined by $f(n,t)=\one_{\{n \in B\}}\one_{\{t \in C\}}.$ Then we have
\begin{eqnarray*}
& & \hspace{-1.0cm}
\cov[f(Z^M_1,U_1),f(Z^M_2,U_2)]\\ &=&\P(Z^M_1\in B,Z^M_2\in B, U_1 \in C, U_2 \in C) 
-\P(Z^M_1\in B, U_1 \in C)\P(Z^M_2\in B, U_2 \in C) \\
&=&\left(\P(Z^M_1\in B,Z^M_2\in B)-\P(Z^M_1\in B)\P(Z^M_2\in B)\right)
\P(U_1 \in C)\P(U_2 \in C),
\end{eqnarray*}
the last equality holding by independence between $Z$ and $\{U_{n,i}\}_{n \in \{1,\ldots,M\}, i \in \{1,\ldots,K\}}$. The right hand term goes to 0 when $M \rightarrow \infty$ by $\PAI(Z)$.
This generalizes to bounded functions with compact support, which concludes the proof.
\end{proof}
\subsection{Main result}
Our goal is to show that if $X=\{X^{M}_{n,i}\}$ are asymptotically independent, then $Y=\{Y^{M}_{n,i}\}$ are as well. In other words, if we choose initial conditions that verify a certain property, this property will hold by induction at any finite discrete time.

\begin{theorem}
\label{thm1}
Let $M \in \mathbb{N}$, let $X=\{X^{M}_{n,i}\}_{n \in \{1,\ldots,M\}, i \in \{1,\ldots,K\}}$ be an array of integer valued random variables (the ``state variables''). Suppose that $\PAI(X)$ holds. Then $\PAI(Y)$ holds as well, where $Y$ is defined by \eqref{eq_rmf1}. Moreover, the arrivals to a given node $A^M_{n,i}$  converge in distribution to a Poisson random variable when $M \rightarrow \infty$.
\end{theorem}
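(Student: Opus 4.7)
The strategy is to compute the joint probability generating function $\Esp[u^{Y^M_{n,i}}v^{Y^M_{m,j}}]$ for fixed $(n,i)\neq(m,j)$ by exploiting the decomposition $Y^M_{n,i}=F(X^M_{n,i},U_{n,i})+A^M_{n,i}$ with $F(x,u):=x\,\one_{\{u>\sigma(x)\}}$: the first summand depends only on the ``local'' variables attached to $(n,i)$, whereas the second depends on the ``global'' sea of other sources through the routing variables. The plan is to decouple these two contributions in the limit $M\to\infty$: the arrivals should converge to an independent Poisson variable irrespective of the local state. I would carry this out by first treating the arrivals in isolation via a conditional-PGF and triangular-LLN argument, then reinserting the fragmentation part using $\PAI(X)$.

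\textbf{Step 1 (arrivals).} Conditioning on $\mathcal{G}:=\sigma(X,U)$ and integrating out the independent routing variables, direct computation yields
\begin{equation*}
\mathcal{E}_M(u,v):=\Esp\bigl[u^{A^M_{n,i}}v^{A^M_{m,j}}\bigm|\mathcal{G}\bigr]
\end{equation*}
as an explicit product over sources $(k,l)$. Writing $s_{k,l}:=\one_{\{U_{k,l}<\sigma(X^M_{k,l})\}}\in\{0,1\}$, each factor has the form $1+((u-1)s_{k,l}+(v-1)s_{k,l})/(M-1)+O(1/M^2)$. When $i\neq j$ the routing variables for the two destinations are independent and $\mathcal{E}_M$ factorises into two one-variable products; when $i=j,\,n\neq m$ a single routing variable routes each spike to $n$, to $m$, or elsewhere, and the $2(K-1)$ ``boundary'' sources with $k\in\{n,m\}$ perturb $\log\mathcal{E}_M$ only by $O(1/M)$. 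Taking logarithms reduces the limit to that of
\begin{equation*}
\frac{1}{M-1}\sum_{k\neq n,\,l\neq i}s_{k,l}\xrightarrow{L^2}(K-1)\,\Esp[\sigma(\tilde X)]=:\beta,
\end{equation*}
which follows from Lemma~\ref{lem_gen_tlln} applied coordinate-by-coordinate with $f(x,u):=\one_{\{u<\sigma(x)\}}$. Thus $\mathcal{E}_M(u,v)\to e^{(u-1)\beta}\,e^{(v-1)\beta}$ in $L^2$; the Poisson convergence of $A^M_{n,i}$ is the marginal statement at $v=1$.

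\textbf{Step 2 (reinsertion of the local part).} Starting from
\begin{equation*}
\Esp[u^{Y^M_{n,i}}v^{Y^M_{m,j}}]=\Esp\bigl[u^{F(X^M_{n,i},U_{n,i})}\,v^{F(X^M_{m,j},U_{m,j})}\,\mathcal{E}_M(u,v)\bigr],
\end{equation*}
the uniform bound $u^F v^F\le 1$ for $u,v\in[0,1]$ together with Cauchy--Schwarz lets one substitute $\mathcal{E}_M$ by its deterministic $L^2$ limit at the cost of a vanishing error. Averaging the remaining factor over the independent uniforms $U_{n,i},U_{m,j}$ reduces the problem to showing
\begin{equation*}
\Esp[\phi_u(X^M_{n,i})\,\phi_v(X^M_{m,j})]\to\Esp[\phi_u(\tilde X)]\,\Esp[\phi_v(\tilde X)],
\end{equation*}
with $\phi_u(x):=\sigma(x)+(1-\sigma(x))u^x\in[0,1]$. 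This is immediate from $\PAI(X)$: the PGF convergence in Definition~\ref{def_pai_1} identifies the joint limit of $(X^M_{n,i},X^M_{m,j})$ as a product of two copies of $\tilde X$, and bounded functionals on the discrete space $\mathbb{N}^2$ then pass to the limit by tightness. Combining the factors gives $\lim\Esp[u^{Y^M_{n,i}}v^{Y^M_{m,j}}]=\Esp[u^{\tilde Y}]\Esp[v^{\tilde Y}]$ with $\tilde Y:=F(\tilde X,\tilde U)+P$ and $\tilde X$, $\tilde U\sim\mathrm{Unif}[0,1]$, $P\sim\Po(\beta)$ mutually independent, which is $\PAI(Y)$.

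The main obstacle, I expect, is the sub-case $i=j$ with $n\neq m$: the shared routing variable $R^M_{k,l,i}$ governs both arrivals, so the conditional PGF does not factorise over $(u,v)$, and one has to verify by direct bookkeeping that the three-way split of each spike (to replica $n$, to replica $m$, or elsewhere) still produces a conditional log-PGF asymptotic to $(u-1)\beta+(v-1)\beta$, with the ``diagonal'' event $\{R=n\}\cap\{R=m\}$ impossible and the $O(1)$-many boundary sources $k\in\{n,m\}$ contributing only $O(1/M)$. All other steps reduce either to the $L^2$ control of $\mathcal{E}_M$ supplied by Lemma~\ref{lem_gen_tlln} (via the mean/variance criterion of Lemma~\ref{lem_L2}) or to the PGF characterisation of $\PAI(X)$ in Definition~\ref{def_pai_1}.
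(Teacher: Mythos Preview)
Your proposal is correct and follows essentially the same route as the paper---condition on $(X,U)$, integrate out the i.i.d.\ routing variables to obtain a product, take logarithms, and apply the generalised triangular law of large numbers (Lemma~\ref{lem_gen_tlln}) to the resulting sums of spike indicators---with the paper breaking the argument into Lemmas~\ref{lem_hatx_1}, \ref{lem_A_conv}, \ref{lem_PAI_A} and \ref{lem_pai_y} and handling the log via the sandwich bounds $-x-x^2/2\le\log(1-x)\le -x$. Your packaging of the final combination step (showing that the \emph{conditional} joint arrival PGF $\mathcal{E}_M(u,v)$ converges in $L^2$ to a deterministic Poisson product and then decoupling the local fragmentation factor via Cauchy--Schwarz) is a clean alternative to the paper's explicit $\phi_1^M/\phi_2^M$ split in Lemma~\ref{lem_pai_y}.
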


Note that the result depends on a choice of initial conditions verifying $\PAI(X)$, a typical example of which is i.i.d. initial conditions stable in law, in the sense that their law does not depend on $M$. The question of whether given an arbitrary initial condition, the dynamics become pairwise asymptotically independent after some (finite or infinite) amount of time, is still open. Note also that this shows that we have convergence in distribution of the exchangeable variables $\{Y^M_n\}$ when $M \rightarrow \infty$.

\section{The proof}\label{sec:thepr}
In the following proof, since $K$ is always finite and all considered random variables are exchangeable, as above, we will sometimes omit the neuron index $i \in \{1,\ldots,K\}$ in order to simplify notation. Tilde superscripts will refer to objects in the infinite replica limit. Hat superscripts will refer to fragmentation processes.

\subsection*{Step one: fragmentation}

\begin{lemma}
\label{lem_hatx_1}
Let $\hat{X}=\{\hat{X}^{M}_{n,i}=X^{M}_{n,i} \one_{\{U_{n,i}>\sigma(X^{M}_{n,i})\}}\}$. Then $\PAI(X)$ implies $\PAI(\hat{X})$.
\end{lemma}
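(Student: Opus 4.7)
The plan is to compute the joint probability generating function $\Esp[u^{\hat X^M_{n,i}} v^{\hat X^M_{m,j}}]$ for $(n,i) \neq (m,j)$ and verify that it factorizes in the limit into a product of the form prescribed by Definition \ref{def_pai_1}.

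First, since $U_{n,i}$ is uniform on $[0,1]$ and independent of $X^M_{n,i}$, one has almost surely
\[
u^{\hat X^M_{n,i}} = \one_{\{U_{n,i} < \sigma(X^M_{n,i})\}} + u^{X^M_{n,i}} \one_{\{U_{n,i} > \sigma(X^M_{n,i})\}}.
\]
Because $(n,i) \neq (m,j)$, the variables $U_{n,i}$ and $U_{m,j}$ are independent of each other and of the state variables. Conditioning on $(X^M_{n,i}, X^M_{m,j})$ and integrating out the two uniforms yields
\[
\Esp[u^{\hat X^M_{n,i}} v^{\hat X^M_{m,j}}] = \Esp[\Phi_u(X^M_{n,i})\, \Phi_v(X^M_{m,j})],
\]
where $\Phi_w(x) := \sigma(x) + (1-\sigma(x))\, w^x$ is a function bounded by $1$ on $\mathbb{N}$ for every $w \in [0,1]$.

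Next, I will invoke the equivalent characterization \eqref{eq_ai_equiv} of $\PAI(X)$, which gives the asymptotic factorization of the joint law of $(X^M_{n,i}, X^M_{m,j})$ on Borel sets. Since $\Phi_u$ and $\Phi_v$ are bounded and the marginal laws of $X^M_{n,i}$ are tight (by the convergence in distribution of $X^M_{n,i}$ to $\tilde X$ observed after Definition \ref{def_tlln_1}), a truncation followed by bounded convergence gives
\[
\Esp[\Phi_u(X^M_{n,i})\, \Phi_v(X^M_{m,j})] \longrightarrow \Esp[\Phi_u(\tilde X)] \, \Esp[\Phi_v(\tilde X)].
\]
Introducing a fresh uniform variable $\tilde U$ independent of $\tilde X$ and setting $\tilde{\hat X} := \tilde X\, \one_{\{\tilde U > \sigma(\tilde X)\}}$, the same conditioning as in the first step shows $\Esp[w^{\tilde{\hat X}}] = \Esp[\Phi_w(\tilde X)]$, so the limit equals $\Esp[u^{\tilde{\hat X}}]\,\Esp[v^{\tilde{\hat X}}]$, which is exactly the factorization required in Definition \ref{def_pai_1}. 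Exchangeability of $\hat X$ in $(n,i)$ is inherited from that of $X$ together with the i.i.d.\ nature of $\{U_{n,i}\}$.

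The one step deserving care is the passage from the PGF-based definition of $\PAI(X)$ to asymptotic factorization of $\Esp[f(X^M_{n,i})\, g(X^M_{m,j})]$ for bounded $f,g$ on $\mathbb{N}$ (applied here to $f = \Phi_u$, $g = \Phi_v$, which are not themselves pure exponentials $x \mapsto w^x$). This relies on the fact that, for $\mathbb{N}$-valued random variables, pointwise convergence of joint PGFs on $[0,1]^2$ is equivalent to pointwise convergence of joint probability mass functions on $\mathbb{N}^2$ (by uniqueness of Taylor coefficients of analytic functions), and on using marginal tightness to control the tail of the resulting series. Once this equivalence is granted, the remainder of the argument is a direct conditioning computation.
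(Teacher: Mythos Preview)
Your proof is correct and follows essentially the same approach as the paper: both reduce the joint PGF of $(\hat X^M_1,\hat X^M_2)$ to an expectation of a bounded function of $(X^M_1,X^M_2)$ and then use that $\PAI(X)$ is equivalent to pointwise convergence of the joint pmf (the paper's \eqref{eq_ai_equiv}) together with a dominated-convergence/tightness argument to factorize the limit. The only difference is cosmetic: you integrate out the uniforms first to obtain the single closed form $\Esp[\Phi_u(X^M_1)\Phi_v(X^M_2)]$, whereas the paper expands the PGF as a double series in the pmf and handles the cases $k=0$ and $l=0$ separately before passing to the limit.
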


\begin{proof}
We have for $u,v \in [0,1]$,
\begin{equation}
\label{eq_hatx_1}
\Esp[u^{\hat{X}^M_1}v^{\hat{X}^M_2}]=\sum_{k,l \in \mathbb{N}}\P(\hat{X}^M_1=k,\hat{X}^M_2=l)u^k v^l.
\end{equation}
For $k,l >0$, we have
\begin{equation}
\label{eq_hatx_2}
\P(\hat{X}^M_1=k,\hat{X}^M_2=l)=\P(X^M_1=k,X^M_2=l)(1-\sigma(k))(1-\sigma(l)).
\end{equation}
Similarly, 
\begin{eqnarray}
\label{eq_hatx_3}
\P(\hat{X}^M_1=k,\hat{X}^M_2=0) & = & \sum_{l\in \mathbb{N}}\P(X^M_1=k,X^M_2=l)(1-\sigma(k))\sigma(l),\quad \forall k>0\\
\label{eq_hatx_4}
\P(\hat{X}^M_1=0,\hat{X}^M_2=0) & = & \sum_{k,l\in \mathbb{N}}\P(X^M_1=k,X^M_2=l)\sigma(k)\sigma(l).
\end{eqnarray}
Since $\PAI(X)$ holds, for all $k,l \in \mathbb{N}$, $\P(X^M_1=k,X^M_2=l) \rightarrow \P(\tilde{X}=k)\P(\tilde{X}=l)$ when $M \rightarrow \infty.$
Since all considered functions are bounded by 1, we have that for all $k,l \in \mathbb{N}$,
\begin{equation*}
\P(\hat{X}^M_1=k,\hat{X}^M_2=l) \rightarrow \P(\tilde{\hat{X}}=k)\P(\tilde{\hat{X}}=l)
\end{equation*}
when $M \rightarrow \infty,$ where $\tilde{\hat{X}}=\tilde{X}\one_{\{U>\sigma(\tilde{X})\}}.$
This shows that 
\begin{equation}
\label{eq_hatx_5}
\Esp[u^{\hat{X}^M_1}v^{\hat{X}^M_2}] \rightarrow \Esp[u^{\tilde{\hat{X}}}]\Esp[v^{\tilde{\hat{X}}}]
\end{equation}
when $M \rightarrow \infty$, which concludes the proof.

\end{proof}

\subsection*{Step two: asymptotic behavior of the arrivals processes}

We now show that the number of arrivals $A^M_{n,i}$ defined in \eqref{eq_rmf2} is asymptotically Poisson as the number of replicas goes to infinity. This is precisely the Poisson Hypothesis introduced in \cite{kleinrock2007communication}.

\begin{lemma}
\label{lem_A_conv}
Supposing that $\PAI(X)$ holds, when $M \rightarrow \infty$, we have the convergence in distribution
$A^M_{n,i} \rightarrow Poi((K-1)\theta)$
where $\theta=\Esp[\sigma(\tilde{X})]$.
\end{lemma}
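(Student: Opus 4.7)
The plan is to compute the probability generating function (PGF) of $A^M_{n,i}$ by conditioning on the full array of state variables $\{X^M_{m,j}\}$, then use the triangular law of large numbers to pass to the limit.

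First, I would exploit the fact that the routing variables $\{R^M_{m,j,i}\}$ and the uniforms $\{U_{m,j}\}$ are all mutually independent and independent of the state variables. Conditionally on $\{X^M_{m,j}\}$, the $M\!-\!1$ indicators $\one_{\{U_{m,j}<\sigma(X^M_{m,j})\}}\one_{\{R^M_{m,j,i}=n\}}$ for varying $(m,j)$ are therefore independent Bernoulli variables with parameters $\sigma(X^M_{m,j})/(M-1)$. Hence, for $z \in [0,1]$,
\begin{equation*}
\Esp\bigl[z^{A^M_{n,i}}\,\big|\,\{X^M_{m,j}\}\bigr]=\prod_{m\neq n}\prod_{j\neq i}\left(1+\frac{(z-1)\sigma(X^M_{m,j})}{M-1}\right).
\end{equation*}

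Next, taking logarithms and using the expansion $\log(1+x)=x+O(x^2)$ uniformly for $|x|\le 1/(M-1)$, I would write
\begin{equation*}
\log\Esp\bigl[z^{A^M_{n,i}}\,\big|\,\{X^M_{m,j}\}\bigr]=(z-1)\sum_{j\neq i}\frac{1}{M-1}\sum_{m\neq n}\sigma(X^M_{m,j})+O(1/M),
\end{equation*}
where the remainder is a deterministic $O(1/M)$ because $\sigma\leq 1$ and the double sum has $(M-1)(K-1)$ terms each contributing $O(1/M^2)$. Now $\sigma$ is bounded but not compactly supported, so Lemma \ref{lem_pai_tlln} does not apply to $\sigma$ directly; however, since the $X^M_{m,j}$ are integer-valued, one can approximate $\sigma$ from below by the truncations $\sigma_R(k)=\sigma(k)\one_{\{k\le R\}}$ and use boundedness by $1$ to control the tail uniformly in $M$ via convergence in distribution to $\tilde X$. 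Applying the $L^2$-TLLN to each $\sigma_R$, then letting $R\to\infty$, I obtain for each $j\neq i$ the $L^2$-convergence $\frac{1}{M-1}\sum_{m\neq n}\sigma(X^M_{m,j})\to \theta$, whence the conditional log-PGF converges in probability to $(K-1)(z-1)\theta$.

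Finally, because the conditional PGF is bounded by $1$, convergence in probability together with dominated convergence yields
\begin{equation*}
\Esp[z^{A^M_{n,i}}]\ \longrightarrow\ \exp\bigl((K-1)\theta(z-1)\bigr),
\end{equation*}
which is the PGF of the $\mathrm{Poi}((K-1)\theta)$ distribution, and convergence of PGFs on $[0,1]$ implies convergence in distribution of the integer-valued variables $A^M_{n,i}$. The main obstacle I anticipate is the technical handling of the extension of the TLLN from compactly supported to merely bounded functions like $\sigma$; once this is settled by truncation plus a uniform tail estimate based on pointwise convergence of $\P(X^M_{n,i}=k)$, the Taylor-expansion step and the passage from conditional to unconditional PGF are routine.
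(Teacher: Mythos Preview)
Your argument is correct and takes a genuinely different route from the paper's proof. The paper conditions on \emph{both} $\{X^M_{m,j}\}$ and $\{U_{m,j}\}$, leaving only the routing variables random; this produces factors of the form $1-\tfrac{1}{M-1}\bigl(1-z^{\one_{\{U_{m,j}<\sigma(X^M_{m,j})\}}}\bigr)$ and forces the use of the \emph{generalized} TLLN (Lemma~\ref{lem_gen_tlln}) applied to the function $(k,u)\mapsto 1-z^{\one_{\{u<\sigma(k)\}}}$. The paper then sandwiches via $-x-\tfrac{x^2}{2}\le\log(1-x)\le -x$ rather than writing a single expansion with an $O(1/M)$ remainder. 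By contrast, you integrate out the uniforms $U_{m,j}$ at the same time as the routing variables, which collapses each factor to $1+\tfrac{(z-1)\sigma(X^M_{m,j})}{M-1}$ and means you only need the \emph{basic} TLLN (Lemma~\ref{lem_pai_tlln}) applied to the single function $\sigma$. Your route is slightly more economical here; the paper's route, keeping the $U$'s unrevealed, generalizes more uniformly to the FIAP setting where the summands are $h(X^M_{m,j})\one_{\{U_{m,j}<\sigma(X^M_{m,j})\}}$ and integrating out $U$ first yields a less symmetric expression.

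One further point worth noting: you correctly flag that $\sigma$ is bounded but not compactly supported, so Lemma~\ref{lem_pai_tlln} does not apply verbatim, and you fix this by truncation $\sigma_R=\sigma\one_{\{\cdot\le R\}}$ together with the tightness of $\{X^M_{1,j}\}_M$ (which follows from convergence in distribution to $\tilde X$) to control $\sup_M \P(X^M_{1,j}>R)$. This is a clean way to close the gap; the paper's application of Lemma~\ref{lem_gen_tlln} faces the same issue, since $(k,u)\mapsto 1-z^{\one_{\{u<\sigma(k)\}}}$ is not compactly supported in $k$ either, and the same truncation device works there too.
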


\begin{proof}
Let $z\in [0,1]$. Then
\begin{equation*}
\begin{split}
\Esp[z^{A^M_{n,i}}]&=\Esp\left[z^{\sum_{m \neq n}\sum_{j \neq i} \one_{\{U_{m,j}<\sigma(X^{M}_{m,j})\}}\one_{\{R^M_{m,j,i}=n\}}}\right]\\
&=\Esp\left[\prod_{m\neq n}\prod_{j \neq i}\Esp\left[z^{ \one_{\{U_{m,j}<\sigma(X^{M}_{m,j})\}}\one_{\{R^M_{m,j,i}=n\}}}\bigg|X^M_{m,j},U\right]\right] \\
&=\Esp\left[\prod_{m\neq n}\prod_{j \neq i}\left(\left(1-\frac{1}{M-1}\right)+\frac{1}{M-1}z^{\one_{\{U_{m,j}<\sigma(X^{M}_{m,j})\}}}\right)\right] \\
&=\Esp\left[e^{\sum_{m \neq n}\sum_{j \neq i}\log\left(1-\frac{1}{M-1}\left(1-z^{\one_{\{U_{m,j}<\sigma(X^{M}_{m,j})\}}}\right)\right)}\right].
\end{split}
\end{equation*}
We now give an upper and lower bound for this expression.
Since $\log(1-x)\leq -x$ for $x\leq 1$, we have
\begin{equation*}
\Esp\left[z^{A^M_{n,i}}\right]\leq \Esp\left[e^{-\frac{1}{M-1}\sum_{m \neq n}\sum_{j \neq i}\left(1-z^{\one_{\{U_{m,j}<\sigma(X^{M}_{m,j})\}}}\right)}\right].
\end{equation*}
Using the generalized TLLN given in Lemma \ref{lem_gen_tlln}, $\frac{1}{M-1}\sum_{m \neq n}\sum_{j \neq i}(1-z^{\one_{\{U_{m,j}<\sigma(X^{M}_{m,j})\}}}) \rightarrow (K-1)(1-\Phi(z))$ in $L^2$ when $M \rightarrow \infty$ with $\Phi(z)=\Esp[z^{\one_{U<\sigma(\tilde{X})}}],$ where $U$ is any $U_{m,j}$. \\
We have $\Phi(z)=z\int_0^1\P(\sigma(\tilde{X})>t)\diff t+(1-\int_0^1\P(\sigma(\tilde{X})>t)\diff t)=(z-1)\theta+1.$

Therefore, since $L^2$ convergence implies convergence in distribution and thus convergence of the Laplace transforms, 
\begin{equation*}
\Esp\left[e^{-\frac{1}{M-1}\sum_{m \neq n}\sum_{j \neq i}\left(1-z^{\one_{\{U_{m,j}<\sigma(X^{M}_{m,j})\}}}\right)}\right] \rightarrow e^{-\theta(1-z)(K-1)}
\end{equation*}
when $M \rightarrow \infty$. Thus,
\begin{equation}
\label{eq_tllnA_1}
\limsup_{M\rightarrow \infty}\Esp[z^{A^M_{n,i}}] \leq e^{-\theta(1-z)(K-1)}.
\end{equation}

Similarly, since $\log(1-x)\geq -x-\frac{x^2}{2}$ for $x\leq 1$, we have
\begin{equation*}
\begin{split}
\Esp\left[z^{A^M_{n,i}}\right]\geq & \Esp\bigg[e^{-\frac{1}{M-1}\sum_{m \neq n}\sum_{j \neq i}\left(1-z^{\one_{\{U_{m,j}<\sigma(X^{M}_{m,j})\}}}\right)}\\
& e^{-\frac{1}{2(M-1)^2}\sum_{m \neq n}\sum_{j \neq i}\left(1-z^{\one_{\{U_{m,j}<\sigma(X^{M}_{m,j})\}}}\right)^2}\bigg].
\end{split}
\end{equation*}
Using once again Lemma \ref{lem_gen_tlln}, as the second term goes to 0 when $M \rightarrow \infty$, by the same reasoning as previously, we get 
\begin{equation}
\label{eq_tllnA_2}
\liminf_{M\rightarrow \infty}\Esp[z^{A^M_{n,i}}] \geq e^{-\theta(1-z)(K-1)}.
\end{equation}
Combining \eqref{eq_tllnA_1} and \eqref{eq_tllnA_2}, the result follows.
\end{proof}

Now, we show that the arrivals to different replicas become pairwise asymptotically independent:

\begin{lemma}
\label{lem_PAI_A}
For all $(n,i) \neq (m,j)$, $A^M_{n,i}$ and $A^M_{m,j}$ are pairwise asymptotically independent.
\end{lemma}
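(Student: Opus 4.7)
My plan is to establish $\PAI(A)$ by showing that the joint probability generating function $\Esp[u^{A^M_{n,i}}v^{A^M_{m,j}}]$ converges as $M\to\infty$ to the product $e^{-\theta(K-1)(1-u)}\,e^{-\theta(K-1)(1-v)}$ for all $u,v\in[0,1]$; this factorization suffices by the same Laplace-transform-to-distribution argument used at the end of Lemma \ref{lem_A_conv} together with the characterization \eqref{eq_ai_equiv}. I would condition on $(X^M,U)$, which is independent of the routing variables, and split the analysis into two cases depending on whether the target nodes $i$ and $j$ coincide.

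When $i\ne j$, the routing variables $R^M_{m',j',i}$ and $R^M_{m',j',j}$ are independent for every source $(m',j')$ because they are indexed by distinct target nodes. Hence, writing $\xi^M_{m',j'}=\one_{\{U_{m',j'}<\sigma(X^M_{m',j'})\}}$, the conditional joint PGF separates into two independent sub-products,
\[
\Esp\!\left[u^{A^M_{n,i}}v^{A^M_{m,j}}\,\Big|\,X,U\right]=\!\!\prod_{\substack{m'\ne n \\ j'\ne i}}\!\!\left(1-\tfrac{1-u^{\xi^M_{m',j'}}}{M-1}\right)\!\!\prod_{\substack{m'\ne m \\ j'\ne j}}\!\!\left(1-\tfrac{1-v^{\xi^M_{m',j'}}}{M-1}\right),
\]
each of which has exactly the form handled in Lemma \ref{lem_A_conv}. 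The $\log(1-x)\le -x$ and $\log(1-x)\ge -x-x^2/2$ sandwich argument, combined with Lemma \ref{lem_gen_tlln} applied separately to the two empirical averages, then gives the desired factorized limit.

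When $i=j$ (and necessarily $n\ne m$), the routing variable $R^M_{m',j',i}$ is shared by both arrival processes, but the events $\{R^M_{m',j',i}=n\}$ and $\{R^M_{m',j',i}=m\}$ are disjoint. For $m'\notin\{n,m\}$ and $j'\ne i$, a direct computation of the per-source conditional expectation gives
\[
\Esp\!\left[u^{\xi^M_{m',j'}\one_{\{R=n\}}}v^{\xi^M_{m',j'}\one_{\{R=m\}}}\,\Big|\,X,U\right]=1-\tfrac{1-u^{\xi^M_{m',j'}}}{M-1}-\tfrac{1-v^{\xi^M_{m',j'}}}{M-1},
\]
while the $2(K-1)$ boundary sources with $m'\in\{n,m\}$ contribute to only one of the two arrivals and produce finitely many factors of size $1-O(1/M)$ whose product tends to $1$. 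Taking logarithms and using $\log(1-a-b)=-a-b+O((a+b)^2)$ for small $a,b$, the exponent splits additively into a $u$-part and a $v$-part, each driven to its deterministic limit in $L^2$ by Lemma \ref{lem_gen_tlln} exactly as in Case~1, and exponentiating produces the same factorized PGF.

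The main obstacle is Case~2, where the shared routing variables initially couple the two arrival processes. The key observation that rescues the argument is the disjointness of $\{R=n\}$ and $\{R=m\}$: the per-source factor is already additive in $u$ and $v$ up to quadratic $O(1/M^2)$ corrections, so after logarithmic linearization the two variables decouple cleanly and the triangular law of large numbers of Lemma \ref{lem_gen_tlln} can be invoked separately for each marginal contribution.
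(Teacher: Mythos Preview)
Your proposal is correct and follows essentially the same route as the paper: condition on $(X^M,U)$, exploit the independence of the routing variables to factor the conditional joint PGF, and then apply the $\log(1-x)$ sandwich together with Lemma~\ref{lem_gen_tlln} exactly as in Lemma~\ref{lem_A_conv}. Your case split (according to whether $i=j$) is a slight reorganization of the paper's three cases, and your treatment of the $i=j$ case is in fact more explicit than the paper's, which merely says this case is handled ``in the same way, with the extra step of isolating the terms that are not independent from each other''; your computation of the per-source factor $1-\tfrac{1-u^{\xi}}{M-1}-\tfrac{1-v^{\xi}}{M-1}$ via disjointness of $\{R=n\}$ and $\{R=m\}$ is exactly the content of that extra step.
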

\begin{proof}
We first show the result in the case $n \neq m$ and $i \neq j$. 
Let $u,v \in [0,1].$
Then
\begin{equation*}
\begin{split}
\Esp\left[u^{A^M_{n,i}}v^{A^M_{m,j}}\right]=&\Esp\bigg[u^{\sum_{n' \neq n, i' \neq i}\one_{\{U_{n',i'}<\sigma(X^M_{n',i'})\}}\one_{\{R^M_{n',i',i}=n\}}} \\
&v^{\sum_{m' \neq m, j' \neq j}\one_{\{U_{m',j'}<\sigma(X^M_{m',j'})\}}\one_{\{R^M_{m',j',j}=m\}}}\bigg] \\
=&\Esp\bigg[\prod_{n' \neq n, i' \neq i}u^{\one_{\{U_{n',i'}<\sigma(X^M_{n',i'})\}}\one_{\{R^M_{n',i',i}=n\}}}\\
&\prod_{m' \neq m, j' \neq j}v^{\one_{\{U_{m',j'}<\sigma(X^M_{m',j'})\}}\one_{\{R^M_{m',j',j}=m\}}}\bigg] \\ 
=&\Esp\Bigg[\Esp\bigg[\prod_{n' \neq n, i' \neq i}u^{\one_{\{U_{n',i'}<\sigma(X^M_{n',i'})\}}\one_{\{R^M_{n',i',i}=n\}}}\\
&\prod_{m' \neq m, j' \neq j}v^{\one_{\{U_{m',j'}<\sigma(X^M_{m',j'})\}}\one_{\{R^M_{m',j',j}=m\}}}\bigg| X^M, U\bigg]\Bigg] \\ 
=&\Esp\bigg[\prod_{n' \neq n, i' \neq i}\left[\left(1-\frac{1}{M-1}\right)+\frac{1}{M-1}u^{\one_{\{U_{n',i'}<\sigma(X^M_{n',i'})\}}}\right] \\
& \prod_{m' \neq m, j' \neq j}\left[\left(1-\frac{1}{M-1}\right)+\frac{1}{M-1}v^{\one_{\{U_{m',j'}<\sigma(X^M_{m',j'})\}}}\right]\bigg] \\
&=\Esp\bigg[e^{\sum_{n' \neq n, i' \neq i}\log\left(1-\frac{1}{M-1}\left(1-u^{\one_{\{U_{n',i'}<\sigma(X^M_{n',i'})\}}}\right)\right)}\\
&e^{\sum_{m' \neq m, j' \neq j}\log\left(1-\frac{1}{M-1}\left(1-v^{\one_{\{U_{m',j'}<\sigma(X^M_{m',j'})\}}}\right)\right)}\bigg].
\end{split}
\end{equation*}
The fourth equality above comes from the independence between the routing variables $R^M$.

Just as in the proof of Lemma \ref{lem_A_conv}, we can give upper and lower bounds of the last right-hand side expression:
\begin{equation*}
\Esp\left[u^{A^M_{n,i}}v^{A^M_{m,j}}\right]\leq \Esp\left[e^{-\frac{1}{M-1}\sum_{n' \neq n, i' \neq i}\left(2-u^{\one_{\{U_{n',i'}<\sigma(X^M_{n',i'})\}}}-v^{\one_{\{U_{n',i'}<\sigma(X^M_{n',i'})\}}}\right)}\right]
\end{equation*}
and
\begin{equation*}
\begin{split}
\Esp\left[u^{A^M_{n,i}}v^{A^M_{m,j}}\right] &\geq \Esp\bigg[e^{-\frac{1}{M-1}\sum_{n' \neq n, i' \neq i}\left(2-u^{\one_{\{U_{n',i'}<\sigma(X^M_{n',i'})\}}}-v^{\one_{\{U_{n',i'}<\sigma(X^M_{n',i'})\}}}\right)} \\
& \hspace{1cm} \cdot e^{-\frac{1}{2(M-1)^2}\sum_{m \neq n}\sum_{j \neq i}\left(2-u^{\one_{\{U_{n',i'}<\sigma(X^M_{n',i'})\}}}-v^{\one_{\{U_{n',i'}<\sigma(X^M_{n',i'})\}}}\right)^2}\bigg]. \\
\end{split}
\end{equation*}
The last right-hand side expression goes to $e^{(1-u+1-v)(K-1)\theta}$ when $M \rightarrow \infty$ in both cases, as previously.
The result follows from these two bounds as in the proof of Lemma \ref{lem_A_conv}.

The case where $n=m$, i.e., when we consider the arrivals to two different neurons in the same replica, is done in the same way since the routing variables are independent from the neurons chosen.
The case where $i=j$, i.e. when we consider the arrivals to the same neuron in two different replicas, is treated in the same way, with the extra step of isolating the terms that are not independent from each other.
\end{proof}

\subsection*{Step three: propagation of pairwise asymptotic independence}
Our goal is now to combine the previous results to show that $\PAI(Y)$ holds, assuming $\PAI(X)$. 
We have that for all $i \in \{1,\ldots,K\}$ and all $n \in \{1,\ldots,M\}$, $Y^M_{n,i}=\hat{X}^M_{n,i}+A^M_{n,i}$. We call $\tilde{A}$ the limit in distribution of $A^M_{n,i}$ (it is Poisson distributed by the previous lemma).
It is clear that by exchangeability between replicas, we only require the following lemma:
\begin{lemma}
\label{lem_pai_y}
For $i,j \in \{1,\ldots,K\}$,
\begin{equation}
\label{eq_paiy_1}
\Esp[u^{Y^M_{1,i}},v^{Y^M_{2,j}}] \rightarrow \Esp[u^{\tilde{Y}}]\Esp[v^{\tilde{Y}}]
\end{equation}
when $M \rightarrow \infty$, where $\tilde{Y}=\tilde{\hat{X}}+\tilde{A}.$
\end{lemma}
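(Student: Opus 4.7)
Since $Y^M_{n,i}=\hat{X}^M_{n,i}+A^M_{n,i}$, the plan is to isolate the routing randomness by conditioning on $\mathcal{F}_M:=\sigma(X^M,U)$, and then show that the resulting conditional expectation of the arrivals part converges to a deterministic constant. Writing
\begin{equation*}
\Esp\bigl[u^{Y^M_{1,i}}v^{Y^M_{2,j}}\bigr]=\Esp\bigl[u^{\hat{X}^M_{1,i}}v^{\hat{X}^M_{2,j}}\Phi_M\bigr],\qquad \Phi_M:=\Esp\bigl[u^{A^M_{1,i}}v^{A^M_{2,j}}\,\big|\,\mathcal{F}_M\bigr],
\end{equation*}
the fragmentation variables $\hat{X}^M_{1,i},\hat{X}^M_{2,j}$ are $\mathcal{F}_M$-measurable while $\Phi_M$ is a function of the routing variables $R^M$, which are independent of $\mathcal{F}_M$.

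Computing $\Phi_M$ reduces to the calculation already carried out in Lemma \ref{lem_PAI_A}, but done conditionally. Product expansion over $R^M$, together with the inequalities $-x-x^2/2\leq\log(1-x)\leq -x$, sandwiches $\Phi_M$ between expressions that differ only by a multiplicative factor of the form $1+O(1/M)$ from
\begin{equation*}
\exp\!\Bigl(-\tfrac{1}{M-1}\sum_{(m,j')}\bigl(2-u^{I_{m,j'}}-v^{I_{m,j'}}\bigr)\Bigr),
\end{equation*}
where $I_{m,j'}:=\one_{\{U_{m,j'}<\sigma(X^M_{m,j'})\}}$ and the sum runs over the pairs contributing to either $A^M_{1,i}$ or $A^M_{2,j}$. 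The subcase $i=j$ is handled by noting that the events $\{R^M_{m,j',i}=1\}$ and $\{R^M_{m,j',i}=2\}$ are mutually exclusive, which preserves the leading-order exponent. Applying Lemma \ref{lem_gen_tlln} to the bounded function $(n,t)\mapsto 2-u^{\one_{\{t<\sigma(n)\}}}-v^{\one_{\{t<\sigma(n)\}}}$, the exponent converges in $L^2$ to $-(K-1)\theta(2-u-v)$, whence $\Phi_M\to c:=e^{-(K-1)\theta(2-u-v)}$ in probability, and boundedly.

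Boundedness of $u^{\hat{X}^M_{1,i}}v^{\hat{X}^M_{2,j}}$ by $1$ together with $\Esp|\Phi_M-c|\to 0$ then yields
\begin{equation*}
\Esp\bigl[u^{Y^M_{1,i}}v^{Y^M_{2,j}}\bigr]=c\cdot\Esp\bigl[u^{\hat{X}^M_{1,i}}v^{\hat{X}^M_{2,j}}\bigr]+o(1).
\end{equation*}
Lemma \ref{lem_hatx_1} gives the limit of the remaining factor as $\Esp[u^{\tilde{\hat X}}]\Esp[v^{\tilde{\hat X}}]$, and since $c=\Esp[u^{\tilde A}]\Esp[v^{\tilde A}]$ for $\tilde A\sim\mathrm{Poi}((K-1)\theta)$ and $\tilde Y=\tilde{\hat X}+\tilde A$ is an independent sum, the product collapses to $\Esp[u^{\tilde Y}]\Esp[v^{\tilde Y}]$, completing the proof.

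The main obstacle is the indirect coupling between $\hat X^M_{\cdot,\cdot}$ and $A^M_{\cdot,\cdot}$ through the shared state configuration $X^M$ and through some common fragmentation variables $U_{m,j'}$; this blocks any naive multiplication of the asymptotic pgfs provided by Lemmas \ref{lem_hatx_1} and \ref{lem_PAI_A}. Conditioning on $\mathcal{F}_M$ sidesteps this coupling, and the fact that the routing averaging extracts an $L^2$-deterministic limit $c$ is crucial: this is precisely what allows the constant to be pulled out of the expectation, so that the limit factorises as the product of two marginal pgfs.
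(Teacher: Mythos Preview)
Your argument is correct and follows essentially the same route as the paper: condition on $\sigma(X^M,U)$, compute the resulting product over routing variables, and use the log bounds together with the triangular law of large numbers (Lemma~\ref{lem_gen_tlln}) to show that the arrivals factor converges to the deterministic constant $c=e^{-(K-1)\theta(2-u-v)}$, after which Lemma~\ref{lem_hatx_1} finishes the job. Your presentation is in fact slightly more transparent than the paper's, which phrases the decoupling as ``$\phi_1^M$ and $\phi_2^M$ are pairwise asymptotically independent''; your formulation makes explicit that the mechanism is precisely that $\Phi_M$ collapses to a constant, which is what licenses pulling it out of the expectation.
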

\begin{proof}
Let $u,v \in [0,1]$. Then, given $i,j \in [0,K]$, with $i \neq j$ for simplicity,
\begin{equation*}
\begin{split}
\Esp\left[u^{Y^M_{1,i}}v^{Y^M_{2,j}}\right]=&\Esp\bigg[u^{\hat{X}^M_{1,i}}v^{\hat{X}^M_{2,j}}u^{\sum_{n' \neq 1, i' \neq i}\one_{\{U_{n',i'}<\sigma(X^M_{n',i'})\}}\one_{\{R^M_{n',i',i}=1\}}}\\
&v^{\sum_{m' \neq 2, j' \neq j}\one_{\{U_{m',j'}<\sigma(X^M_{m',j'})\}}\one_{\{R^M_{m',j',j}=2\}}}\bigg] \\
=&\Esp\bigg[u^{\hat{X}^M_{1,i}}v^{\hat{X}^M_{2,j}}\prod_{n' \neq 1, i' \neq i}u^{\one_{\{U_{n',i'}<\sigma(X^M_{n',i'})\}}\one_{\{R^M_{n',i',i}=1\}}}\\
&\prod_{m' \neq 2, j' \neq j}v^{\one_{\{U_{m',j'}<\sigma(X^M_{m',j'})\}}\one_{\{R^M_{m',j',j}=2\}}}\bigg] \\ 
=&\Esp\Bigg[\Esp\bigg[u^{\hat{X}^M_{1,i}}v^{\hat{X}^M_{2,j}}\prod_{n' \neq 1, i' \neq i}u^{\one_{\{U_{n',i'}<\sigma(X^M_{n',i'})\}}\one_{\{R^M_{n',i',i}=1\}}}\\
&\prod_{m' \neq 2, j' \neq j}v^{\one_{\{U_{m',j'}<\sigma(X^M_{m',j'})\}}\one_{\{R^M_{m',j',j}=2\}}}\bigg|X^M,U\bigg]\Bigg]\\
=&\Esp\bigg[u^{\hat{X}^M_{1,i}}v^{\hat{X}^M_{2,j}}\prod_{n' \neq 1, i' \neq i}\left(\frac{1}{M-1} u^{\one_{\{U_{n',i'}<\sigma(X^M_{n',i'})\}}}+\left(1-\frac{1}{M-1}\right)\right)
\\
&\prod_{m' \neq 2, j' \neq j}\left(\frac{1}{M-1}v^{\one_{\{U_{m',j'}<\sigma(X^M_{m',j'})\}}}+\left(1-\frac{1}{M-1}\right)\right)\bigg] \\
=&\Esp\left[\phi_1^M(u,v)\phi_2^M(u,v)\right],
\end{split}
\end{equation*}
where 
\begin{equation*}
\begin{split}
\phi_1^M(u,v)=&u^{\hat{X}^M_{1,i}}\left(1-\frac{1}{M-1}+\frac{1}{M-1}v^{\one_{\{U_{1,i}<\sigma(X^M_{1,i})\}}}\right)\\
&v^{\hat{X}^M_{2,j}}\left(1-\frac{1}{M-1}+\frac{1}{M-1}u^{\one_{\{U_{2,j}<\sigma(X^M_{2,j})\}}}\right)
\end{split}
\end{equation*}
and
\begin{equation*}
\begin{split}
\phi_2^M(u,v)=&e^{\sum_{n' \neq 1; i' \neq i;(n',i')\neq (2,j)}\log\left(1-\frac{1}{M-1}\left(1-u^{\one_{\{U_{n',i'}<\sigma(X^M_{n',i'})\}}}\right)\right)} \\
&e^{\sum_{m' \neq 2; j' \neq j;(m',j')\neq (1,i)}\log\left(1-\frac{1}{M-1}\left(1-v^{\one_{\{U_{m',j'}<\sigma(X^M_{m',j'})\}}}\right)\right)}.
\end{split}
\end{equation*}

When $M \rightarrow \infty$, by Lemmas \ref{lem_hatx_1} and \ref{lem_PAI_A}, $\phi_1^M(u,v)$ and $\phi_2^M(u,v)$ are pairwise asymptotically independent. Since in $\phi_2^M(u,v),$ the contribution of the missing terms in the sum is negligible, when $M \rightarrow \infty,$ we have
\begin{equation}
\label{eq_paiy_2}
\Esp\left[\phi_1^M(u,v)\phi_2^M(u,v)\right] \rightarrow \Esp\left[u^{\tilde{\hat{X}}}\right]\Esp\left[v^{\tilde{\hat{X}}}\right]\Esp\left[u^{\tilde{\hat{A}}}\right]\Esp\left[v^{\tilde{\hat{A}}}\right].
\end{equation}
This shows that \eqref{eq_paiy_1} holds.
\end{proof} 
Thus, $\PAI(X)$ implies $\PAI(Y)$, which concludes the proof of the theorem. Note that Lemma \ref{lem_pai_y} also shows that $\tilde{\hat{X}}$ and $\tilde{A}$ are independent.

\section{The symmetric fragmentation-interaction-aggregation process}\label{sec:sfiap}
Our goal is to show that propagation of chaos and the Poisson hypothesis hold in the more general setting of symmetric FIAPs under mild hypotheses on the dynamics of the system. The symmetrical evolution equations read


\begin{equation}
\label{eq_sym1}
Y_{i}=g_1(X_{i})\one_{\{U_{i}<\sigma(X_{i})\}}+g_2(X_{i})\one_{\{U_{i}>\sigma(X_{i})\}}+A_{i}
\end{equation}
where
\begin{equation}
\label{eq_sym2}
A_{i}=\sum_{j \neq i} h(X_j)\one_{\{U_{j}<\sigma(X_{j})\}}
\end{equation}
and $g_1,g_2,h : \mathbb{N} \rightarrow \mathbb{N}$ are functions such that $h$ is bounded.


We now introduce the corresponding replica dynamics.
Let $\{X^{M}_{n,i}\}$ be the integer-valued state variables at step 0, where $n \in \{1,\ldots,M\}$ and $i \in \{1,\ldots,K\}.$ Let $\{Y^{M}_{n,i}\}$ be the integer-valued state variables at time one.
Let $\{U_{n,i}\}$ be i.i.d. random variables independent from $\{X^{M}_{n,i}\}$ uniformly distributed in $[0,1]$. We introduce again
the i.i.d. routing variables $R^M_{m,j,i}$, independent from $\{U_{n,i}\}$ and $\{X^{M}_{n,i}\}$ and uniformly distributed in $\{1,\ldots,M\}\setminus\{m\}$.
The $M$-replica equations read:

\begin{equation}
\label{eq_rmf_sym1}
Y^{M}_{n,i}=g_1(X^{M}_{n,i})\one_{\{U_{n,i}<\sigma(X^{M}_{n,i})\}}+g_2(X^{M}_{n,i})\one_{\{U_{n,i}>\sigma(X^{M}_{n,i})\}}+A^M_{n,i},
\end{equation}
where
\begin{equation}
\label{eq_rmf_sym2}
A^M_{n,i}=\sum_{m \neq n}\sum_{j \neq i} h(X^M_{m,j})\one_{\{U_{m,j}<\sigma(X^{M}_{m,j})\}}\one_{\{R^M_{m,j,i}=n\}}
\end{equation}
is the number of arrivals in node $i$ of replica $n$.


We also recall the definition of a compound Poisson distribution:
\begin{definition}
\label{def_compP}
The random variable $X$ is said to follow a compound Poisson distribution if there exist a Poisson($\lambda$) random variable $N$ and i.i.d. random variables $(X_i)_{i \in \mathbb{N}^\star}$ independent from $N$ such that
$X=\sum_{i=1}^N X_i.$
The generating function of $X$, denoted $\phi_X$, is given by 
\begin{equation}
\label{eq_pgf_compoundp}
\phi_X(t)=e^{\lambda(\phi(t)-1)},
\end{equation}
where $\phi(t)$ is the generating function of $X_1$.
\end{definition}

We have the following theorem:
\begin{theorem}
\label{thm2}
For all symmetric RMF FIAP dynamics, $\PAI(X)$ implies $\PAI(Y)$. Moreover, the arrivals to a given node are asymptotically compound Poisson distributed.
\end{theorem}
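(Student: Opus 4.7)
The plan is to follow the same three-step program as the proof of Theorem \ref{thm1}, adapting the bookkeeping to the new fragmentation maps $g_1, g_2$ and the weight function $h$. Setting $\hat X^M_{n,i} \bydef g_1(X^M_{n,i})\one_{\{U_{n,i}<\sigma(X^M_{n,i})\}} + g_2(X^M_{n,i})\one_{\{U_{n,i}>\sigma(X^M_{n,i})\}}$, one has $Y^M_{n,i} = \hat X^M_{n,i} + A^M_{n,i}$, and it therefore suffices to (i) establish $\PAI(X) \Rightarrow \PAI(\hat X)$, (ii) show that each $A^M_{n,i}$ converges in distribution to a compound Poisson variable and that the $A^M_{n,i}$ are pairwise asymptotically independent across $(n,i)$, and then (iii) combine these two facts to conclude $\PAI(Y)$.

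Step (i) is a direct rerun of Lemma \ref{lem_hatx_1}: expanding $\Esp[u^{\hat X^M_{1,i}} v^{\hat X^M_{2,j}}] = \sum_{k,l}\P(X^M_{1,i}=k, X^M_{2,j}=l)\,[\sigma(k)u^{g_1(k)} + (1-\sigma(k))u^{g_2(k)}]\,[\sigma(l)v^{g_1(l)} + (1-\sigma(l))v^{g_2(l)}]$, $\PAI(X)$ gives the limit of each individual mass, and dominated convergence with the masses themselves as summable bound passes the limit under the double sum. For step (ii), conditioning on $(X^M, U)$ and integrating out the routing variables gives, as in Lemma \ref{lem_A_conv},
\[
\Esp\bigl[z^{A^M_{n,i}}\bigr] = \Esp\Biggl[\exp\Biggl(\sum_{m \neq n, j \neq i}\log\Bigl(1 - \tfrac{1}{M-1}\bigl(1 - z^{h(X^M_{m,j})\one_{\{U_{m,j}<\sigma(X^M_{m,j})\}}}\bigr)\Bigr)\Biggr)\Biggr].
\]
The sandwich $-x - x^2/2 \leq \log(1-x) \leq -x$ then reduces matters to the $L^2$ convergence $\tfrac{1}{M-1}\sum_{m \neq n}(1 - z^{h(X^M_{m,j})\one_{\{U_{m,j}<\sigma(X^M_{m,j})\}}}) \to 1 - \Psi(z)$ with $\Psi(z) = \Esp[z^{h(\tilde X)\one_{\{U<\sigma(\tilde X)\}}}]$, obtained by applying Lemma \ref{lem_gen_tlln} to the bounded integrand $f(k,u) = 1 - z^{h(k)\one_{\{u<\sigma(k)\}}}$. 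Summing over the $K-1$ indices $j \neq i$ yields $\Esp[z^{A^M_{n,i}}] \to \exp((K-1)(\Psi(z)-1))$; writing $\theta = \Esp[\sigma(\tilde X)]$ and $\phi(z) = \Esp[\sigma(\tilde X)z^{h(\tilde X)}]/\theta$, this is precisely the PGF of a compound Poisson variable with rate $(K-1)\theta$ and jump PGF $\phi$, in the sense of Definition \ref{def_compP}.

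Pairwise asymptotic independence of the arrivals, and then step (iii), follow the two-variable PGF templates of Lemmas \ref{lem_PAI_A} and \ref{lem_pai_y} essentially verbatim: I would split $\Esp[u^{Y^M_{1,i}} v^{Y^M_{2,j}}]$ into a factor $\phi_1^M(u,v)$ carrying $\hat X^M_{1,i}$, $\hat X^M_{2,j}$ together with the two cross-routing terms $R^M_{1,i,j}=2$ and $R^M_{2,j,i}=1$, and a factor $\phi_2^M(u,v)$ collecting all the remaining routed contributions, use steps (i) and (ii) to show that $\phi_1^M$ and $\phi_2^M$ are pairwise asymptotically independent, and pass to the limit. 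The principal obstacle I anticipate is the technical point inside step (ii): the integrand $f(k,u) = 1 - z^{h(k)\one_{\{u<\sigma(k)\}}}$ is bounded on $\mathbb{N} \times [0,1]$ but not of compact support in $k$, so Lemma \ref{lem_gen_tlln} as stated does not apply verbatim. A direct inspection of the proof of Lemma \ref{lem_gen_tlln} shows, however, that its variance and covariance estimates, driven entirely by $\PAI$ and the independence of the $U_{n,i}$, in fact go through for any bounded measurable $f$, since $Z^M_{n,i}$ being integer-valued makes convergence in distribution equivalent to pointwise convergence of the mass functions and the indicator reduction in that proof already allowed general $B \in \mathcal{B}(\mathbb{R})$. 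It thus suffices to extend Lemma \ref{lem_gen_tlln} to all bounded $f$ at essentially no extra cost and invoke this extension for $h$ bounded.
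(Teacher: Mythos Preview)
Your proposal is correct and follows the paper's own proof essentially step for step: the paper replaces Lemma~\ref{lem_hatx_1} by Lemma~\ref{lem_hat_2} (your step~(i), though the paper splits $\hat X$ into $\hat X^{1}+\hat X^{2}$ before recombining), replaces Lemma~\ref{lem_A_conv} by Lemma~\ref{lem_conv_A_2} (your step~(ii)), and then invokes Lemmas~\ref{lem_PAI_A} and~\ref{lem_pai_y} verbatim for step~(iii). Your observation that Lemma~\ref{lem_gen_tlln} is being applied to a bounded but not compactly supported $f$ --- already in the proof of Lemma~\ref{lem_A_conv} --- and that its proof goes through for all bounded $f$ regardless, is a genuine clarification that the paper itself glosses over.
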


We will require the following lemmas. 
The following result replaces Lemma \ref{lem_hatx_1}:
\begin{lemma}
\label{lem_hat_2}
Let $\hat{X^1}=\{\hat{X}^{1,M}_{n,i}=g_1(X^{M}_{n,i})\one_{\{U_{n,i}<\sigma(X^{M}_{n,i})\}}\}.$ \\
Let $\hat{X^2}=\{\hat{X}^{2,M}_{n,i}=g_2(X^{M}_{n,i})\one_{\{U_{n,i}>\sigma(X^{M}_{n,i})\}}\}.$
Then $\PAI(X)$ implies $\PAI(\hat{X^1})$, $\PAI(\hat{X^2})$ and $\PAI(\hat{X})$, where $\hat{X}=\hat{X^1}+\hat{X^2}$.
\end{lemma}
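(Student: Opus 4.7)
My plan is to adapt the proof of Lemma \ref{lem_hatx_1} by working with \emph{conditional} bivariate PGFs: rather than expanding the joint pmf of $\hat X^1$ in terms of that of $X$ (which would require infinite sums over the possibly unbounded preimages of $g_1$), I condition on the state variables $X^M$ and exploit the independence of the fragmentation uniforms. Fix $(n,i) \neq (m,j)$. Because $\hat X^{1,M}_{n,i}$ and $\hat X^{1,M}_{m,j}$ depend on the disjoint uniforms $U_{n,i}, U_{m,j}$, both independent of $X^M$, they are conditionally independent given $X^M$, and for $u, v \in [0, 1]$,
\[
\Esp\bigl[u^{\hat X^{1,M}_{n,i}} v^{\hat X^{1,M}_{m,j}}\bigr] = \Esp\bigl[\phi^1_u(X^M_{n,i})\, \phi^1_v(X^M_{m,j})\bigr],
\]
with $\phi^1_u(x) := 1 - \sigma(x) + \sigma(x) u^{g_1(x)} \in [0,1]$. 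The same conditioning yields $\phi^2_u(x) = \sigma(x) + (1-\sigma(x)) u^{g_2(x)}$ for $\hat X^2$ and, using the mutual exclusivity of $\{U < \sigma(X)\}$ and $\{U > \sigma(X)\}$ for the same $U$, $\phi_u(x) = \sigma(x) u^{g_1(x)} + (1-\sigma(x)) u^{g_2(x)}$ for $\hat X$; all three conditional PGFs take values in $[0,1]$ for $u \in [0,1]$ and are therefore bounded on $\mathbb{N}$.

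With this reduction, it suffices to show $\Esp[\phi_u(X^M_{n,i}) \phi_v(X^M_{m,j})] \to \Esp[\phi_u(\tilde X)]\,\Esp[\phi_v(\tilde X)]$ for each relevant bounded $\phi$. From $\PAI(X)$ and the continuity theorem for bivariate PGFs of $\mathbb{N}$-valued pairs, the joint pmf satisfies $\P(X^M_{n,i}=k,\, X^M_{m,j}=l) \to \P(\tilde X = k)\P(\tilde X = l)$ pointwise in $(k,l)$; Scheff\'e's lemma upgrades this to $\ell^1$ convergence, so that $\Esp[F(X^M_{n,i}, X^M_{m,j})] \to \Esp[F(\tilde X_1, \tilde X_2)]$ for every bounded $F : \mathbb{N}^2 \to \mathbb{R}$ and for i.i.d. copies $\tilde X_1, \tilde X_2$ of $\tilde X$. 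Applying this to $F(x_1,x_2) = \phi_u(x_1) \phi_v(x_2)$ yields the desired factorization. Identifying, for example, $\Esp[\phi^1_u(\tilde X)] = \Esp[u^{g_1(\tilde X) \one_{\{U < \sigma(\tilde X)\}}}] = \Esp[u^{\tilde{\hat{X^1}}}]$ with $\tilde{\hat{X^1}} := g_1(\tilde X) \one_{\{U < \sigma(\tilde X)\}}$ and $U$ an independent uniform on $[0,1]$ closes the computation for $\hat X^1$, and the arguments for $\hat X^2$ and $\hat X$ are strictly analogous.

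The only (mild) obstacle is the upgrade from $\PAI$, which is phrased purely as bivariate PGF convergence, to convergence of joint expectations of arbitrary bounded functions of the state. Because the state space is discrete, Scheff\'e's lemma bridges this gap automatically and spares us the more delicate uniform integrability or tightness arguments that would be required in a continuous state space; the remainder of the proof is essentially a reorganization of Lemma \ref{lem_hatx_1} to accommodate the two fragmentation functions $g_1, g_2$ and their possibly non-injective images.
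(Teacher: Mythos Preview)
Your proof is correct and follows the same underlying idea as the paper's: both reduce the problem to showing that $\Esp[F(X^M_{n,i},X^M_{m,j})]\to\Esp[F(\tilde X_1,\tilde X_2)]$ for a bounded $F$ built from the fragmentation data, using the pointwise joint pmf convergence that $\PAI(X)$ entails. The paper, however, organizes the computation differently: it expands the bivariate PGF of $\hat X^2$ in the image variables $(k,l)$ and then writes $\P(\hat X^{2,M}_1=k,\hat X^{2,M}_2=l)$ as a double sum over the preimages $g_2^{-1}(k)\times g_2^{-1}(l)$, handling the cases $k=0$ or $l=0$ separately, before invoking the ``bounded by $1$'' argument to pass to the limit. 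Your conditioning on $X^M$ at the outset collapses those preimage sums into the single bounded function $\phi_u(x)\phi_v(y)$, and your explicit appeal to Scheff\'e's lemma makes rigorous the limit interchange that the paper leaves implicit. The gain is that you never need $g_1,g_2$ to have finite preimages or treat the zero cases separately; the cost is nil, since the argument is no longer and the tools are standard.
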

\begin{proof}
We proceed exactly as in Lemma \ref{lem_hatx_1}. We write here only the proof for $\hat{X^2}$, the others being identical except for the numerical expressions involved.
We have for $u,v \in [0,1]$,
\begin{equation}
\label{eq_sym_hatX_1}
\Esp[u^{\hat{X}^{2,M}_1}v^{\hat{X}^{2,M}_2}]=\sum_{k,l \in \mathbb{N}}\P(\hat{X}^{2,M}_1=k,\hat{X}^{2,M}_2=l)u^{k}v^{l}.
\end{equation}
For $k,l >0$, we have 
\begin{equation}
\label{eq_sym_hatX_2}
\begin{split}
\P(\hat{X}^{2,M}_1=k,\hat{X}^{2,M}_2=l)=&\sum_{p,q \in \mathbb{N}}\P(g_2(X^M_1)=k,g_2(X^M_2)=l, X^M_1=p, X^M_2=q)\\
&(1-\sigma(p))(1-\sigma(q)).
\end{split}
\end{equation}
Since $\PAI(X)$ holds, 
\begin{equation*}
\P(g_2(p)=k,g_2(q)=l, X^M_1=p, X^M_2=q) \rightarrow \P(\tilde{X}=p, g_2(p)=k)\P(\tilde{X}=q, g_2(q)=l)
\end{equation*} 
when $M \rightarrow \infty.$
Hence, $\Esp[u^{\hat{X}^{2,M}_1}v^{\hat{X}^{2,M}_2}] \rightarrow \Esp\left[u^{g_2(\tilde{X})\one_{\{U<\sigma(\tilde{X})\}}}\right]\Esp\left[v^{g_2(\tilde{X})\one_{\{U<\sigma(\tilde{X})\}}}\right]$ when $M \rightarrow \infty$.
The cases where $k$ and/or $l$ are equal to 0 are handled in the same way. This proves the result.
\end{proof}
 
We now prove the following result, which replaces Lemma \ref{lem_A_conv}:
\begin{lemma}
\label{lem_conv_A_2}
Supposing that $\PAI(X)$ holds, when $M \rightarrow \infty,$ we have the convergence in distribution
$A^M_{n,i} \rightarrow \tilde{A}$,
where $\tilde{A}$ follows a compound Poisson distribution.
\end{lemma}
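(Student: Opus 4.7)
The plan is to follow the same three-stage strategy as in the proof of Lemma \ref{lem_A_conv}, keeping track of the jump sizes $h(X^M_{m,j})$ in addition to the activation indicators; the standing assumption that $h$ is bounded is precisely what will allow continued use of the generalized triangular law of large numbers in Lemma \ref{lem_gen_tlln}.

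First, I would compute the generating function of $A^M_{n,i}$ by conditioning on $(X^M,U)$ and then on the routing variables, exploiting the fact that each $R^M_{m,j,i}$ is uniform on $\{1,\ldots,M\}\setminus\{m\}$ and that the $R^M_{m,j,i}$ are independent across $(m,j)$. Writing $\kappa_{m,j}:=h(X^M_{m,j})\one_{\{U_{m,j}<\sigma(X^M_{m,j})\}}$, this yields
\[
\Esp[z^{A^M_{n,i}}]=\Esp\bigg[\prod_{m\ne n}\prod_{j\ne i}\Bigl(1-\tfrac{1}{M-1}+\tfrac{1}{M-1}z^{\kappa_{m,j}}\Bigr)\bigg].
\]
Applying $-x-x^2/2\le\log(1-x)\le -x$ with $x=(1-z^{\kappa_{m,j}})/(M-1)\in[0,1]$ then sandwiches this pgf between $\Esp[\exp(-S_M/(M-1))]$ and the same quantity multiplied by an additional factor of order $1/M$, where $S_M:=\sum_{m\ne n}\sum_{j\ne i}(1-z^{\kappa_{m,j}})$.

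The crucial step is to rewrite $1-z^{\kappa_{m,j}}=(1-z^{h(X^M_{m,j})})\one_{\{U_{m,j}<\sigma(X^M_{m,j})\}}$ and to apply Lemma \ref{lem_gen_tlln} to the function $f(x,u):=(1-z^{h(x)})\one_{\{u<\sigma(x)\}}$. Because $h$ is bounded, $f$ takes only finitely many values, so the hypotheses of the generalized TLLN are satisfied and
\[
\frac{S_M}{M-1}\xrightarrow{L^2}(K-1)\,\Esp\bigl[(1-z^{h(\tilde X)})\sigma(\tilde X)\bigr]=(K-1)\bigl(\theta-\Esp[\sigma(\tilde X)z^{h(\tilde X)}]\bigr).
\]
$L^2$ convergence gives convergence in distribution, hence of the bounded Laplace transforms appearing in the upper and lower bounds, so both collapse to the common limit $\exp\bigl(-(K-1)\Esp[\sigma(\tilde X)(1-z^{h(\tilde X)})]\bigr)$.

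Finally, I would identify this limit as a compound Poisson pgf. Setting $\Psi(z):=\Esp[\sigma(\tilde X)z^{h(\tilde X)}]/\theta$, which is a genuine pgf (the size-biased distribution of $h(\tilde X)$ under the weight $\sigma(\tilde X)/\theta$), the limit rewrites as $\exp\bigl((K-1)\theta(\Psi(z)-1)\bigr)$; by Definition \ref{def_compP} this is the pgf of a compound Poisson random variable $\tilde A$ with intensity $(K-1)\theta$ and i.i.d. jumps of pgf $\Psi$. I expect the only delicate point to be the verification that Lemma \ref{lem_gen_tlln} applies to the new function $f$: the boundedness of $h$ is exactly what circumvents integrability issues and is the reason this hypothesis is imposed throughout Section \ref{sec:sfiap}. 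Apart from this step, the argument is a direct generalization of Lemma \ref{lem_A_conv}, with the scalar activation indicator replaced by the weighted indicator $z^{h(\cdot)}\one_{\{\cdot\}}$.
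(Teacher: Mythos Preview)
Your proposal is correct and follows essentially the same route as the paper: compute the pgf by conditioning on $(X^M,U)$, sandwich the logarithm, and invoke Lemma~\ref{lem_gen_tlln} on the bounded function built from $h$ and $\sigma$; the paper simply packages the limit as $e^{(K-1)(\Phi(z)-1)}$ with $\Phi(z)=\Esp[z^{h(\tilde X)\one_{\{U<\sigma(\tilde X)\}}}]$, which is the same expression as yours after using $1-\Phi(z)=\Esp[\sigma(\tilde X)(1-z^{h(\tilde X)})]$. Your explicit reparametrization of the compound Poisson with intensity $(K-1)\theta$ and jump pgf $\Psi$ is a harmless refinement of the paper's identification.
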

\begin{proof}
We still have, just like in the proof of Lemma \ref{lem_A_conv}, that for $z\in [0,1], i \in \{1,\ldots,K\}, n \in \{1,\ldots,M\}$,
\begin{equation}
\label{eq_sym_A_1}
\Esp\left[z^{A^M_{n,i}}\right]=\Esp\left[e^{\sum_{m \neq n}\sum_{j \neq i}\log\left(1-\frac{1}{M-1}\left(1-z^{h(X^M_{m,j})\one_{\{U_{m,j}<\sigma(X^{M}_{m,j})\}}}\right)\right)}\right].
\end{equation}
Using the same arguments as before, we have when $M \rightarrow \infty$
\begin{equation}
\label{eq_sym_A_2}
\Esp\left[z^{A^M_{n,i}}\right] \rightarrow e^{(K-1)(\Phi(z)-1)},
\end{equation}
where $\Phi(z)=\Esp\left[z^{h(\tilde{X})\one_{\{U<\sigma(\tilde{X})\}}}\right]$,
which is precisely of the form \eqref{eq_pgf_compoundp}, that is, a generating function of a random variable with a compound Poisson distribution.
\end{proof}
We now combine these results to prove Theorem \ref{thm2}.
\begin{proof}
We follow the outline of the previous section. Lemmas \ref{lem_L2}, \ref{lem_pai_tlln} and \ref{lem_gen_tlln} still apply as previously. Lemma \ref{lem_hat_2} replaces Lemma \ref{lem_hatx_1}. Lemma \ref{lem_conv_A_2} replaces Lemma \ref{lem_A_conv}. Lemmas \ref{lem_PAI_A} and \ref{lem_pai_y} still hold, with only differences in the limiting expressions.
\end{proof}

\section{The general fragmentation-interaction-aggregation process}\label{sec:gfiap}
The previously introduced exchangeable dynamics allow for simpler computations at the expense of realistic modeling. For example, neuron populations are not homogeneous and are not fully connected. In order to account for such a geometry, we now generalize the previous result to the case where the functions governing the information received by a node when another node activates depend on the nodes involved. Specifically, recall the class $\Cset$ of discrete FIAP defined in Section 1.

For any process in $\Cset$, we can define a replica mean field model as in the previous sections: we consider a collection of $M$ identically distributed replicas of a set of $K$ nodes, which could be neurons, particles, queues or other objects, depending on context.
As previously, let $\{X^{M}_{n,i}\}$ be the integer-valued state variables at step 0, where $n \in \{1,\ldots,M\}$ and $i \in \{1,\ldots,K\}.$ Let $\{Y^{M}_{n,i}\}$ be the integer-valued state variables at time one.
Let $\{U_{n,i}\}$ be uniformly distributed on $[0,1]$ i.i.d. random variables independent from $\{X^{M}_{n,i}\}$. Let $\{R^M_{m,j,i}\}$ be i.i.d. routing random variables independent from $\{X^{M}_{n,i}\}$ and $\{U_{n,i}\}$, uniformly distributed on  $\{1,\ldots,M\}\setminus\{m\}$ for all $i,j \in \{1,\ldots,K\}$ and $m \in \{1,\ldots,M\}.$
Recall that the $M$-RMF equations read
\begin{equation}
\label{eq_gen_rmf_1_bis}
Y^{M}_{n,i}=g_{1,i}(X^{M}_{n,i})\one_{\{U_{n,i}<\sigma_i(X^{M}_{n,i})\}}+g_{2,i}(X^{M}_{n,i})\one_{\{U_{n,i}>\sigma_i(X^{M}_{n,i})\}}+A^M_{n,i},
\end{equation}
where
\begin{equation}
\label{eq_gen_rmf_2_bis}
A^M_{n,i}=\sum_{m \neq n}\sum_{j \neq i} h_{i,j}(X^M_{m,j})\one_{\{U_{m,j}<\sigma_i(X^{M}_{m,j})\}} \one_{\{R^M_{m,j,i}=n\}}
\end{equation}
is the number of arrivals in node $i$ of replica $n$. We now show that the result from the previous section carries over to this more general setting with only minor modifications. 

First, we must slightly modify the definition of pairwise asymptotic independence in order to take into account the dependence on the node of the limiting distribution. As a simplification, we keep the same notations for this modified definition.
\begin{definition}
\label{def_pai_gen}
Given $M \in \mathbb{N}$, given an array of integer-valued random variables $Z=\{Z^{M}_{n,i}\}_{1 \leq n \leq M, 1 \leq i \leq K}$ such that for all fixed $M$, the random variables $Z^M_{n,i}$ are exchangeable in $n$, we say that the variables $Z^M_{n,i}$ are pairwise asymptotically independent, which we will denote $\PAI(Z)$, if there exist integer-valued random variables $(\tilde{Z}_i)_{i \in \{1,\ldots,K\}}$ such that $\forall (n,i) \neq (m,j),\forall u,v \in [0,1]$,
\begin{equation}
\label{eq_gen_pai_1}
\lim_{M \rightarrow \infty}\Esp[u^{Z^M_{n,i}}v^{Z^M_{m,j}}] =\Esp[u^{\tilde{Z}_i}]\Esp[v^{\tilde{Z}_j}].
\end{equation}
\end{definition}
For clarity of exposition, we also recall here the definition of the triangular law of large numbers, even though it is left unchanged:
\begin{definition}
\label{def_tlln_gen}
Given $M \in \mathbb{N}$, given an array of integer-valued random variables $Z=\{Z^{M}_{n}\}_{n \in \{1,\ldots,M\}}$ such that for all fixed $M$, the random variables $Z^M_{n}$ are exchangeable in $n$, we say that $Z$ verifies the triangular law of large numbers $\TLLN(Z)$ if there exist an integer-valued random variable $\tilde{Z}$ such that for all functions $f:\mathbb{N} \rightarrow \mathbb{R}$ with compact support, we have the following limit in $L^2$:
\begin{equation}
\label{eq_gen_tlln_1}
\lim_{M\rightarrow\infty}\frac{1}{M}\sum_{n=1}^M f(Z_{n}^M)=\Esp[f(\tilde{Z})].
\end{equation} 
\end{definition}

Then, we obtain the same result:

\begin{theorem}
\label{thm3}
Using previously defined notations, $\PAI(X)$ implies $\PAI(Y)$. Moreover, the arrivals to a given node are asymptotically compound Poisson distributed and are independent of the states of the nodes.
\end{theorem}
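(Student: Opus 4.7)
The plan is to mirror the three-step argument of Theorem \ref{thm2}, carefully bookkeeping the node-dependent data: the fragmentation maps $g_{1,i},g_{2,i}$, the activation probabilities $\sigma_i$, the interaction kernels $h_{i,j}$, and the family of limiting laws $(\tilde{X}_i)_{i=1,\dots,K}$ from the modified definition of $\PAI$ in Definition \ref{def_pai_gen}. Throughout, I will use exchangeability in the replica index $n$ only (not across nodes), which is the only symmetry the general RMF dynamics preserves.

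\textbf{Step one (fragmentation).} I would define, for each node $i$, the fragmented variables
\begin{equation*}
\hat{X}^{1,M}_{n,i}=g_{1,i}(X^M_{n,i})\one_{\{U_{n,i}<\sigma_i(X^M_{n,i})\}},\quad \hat{X}^{2,M}_{n,i}=g_{2,i}(X^M_{n,i})\one_{\{U_{n,i}>\sigma_i(X^M_{n,i})\}},
\end{equation*}
and $\hat{X}^M_{n,i}=\hat{X}^{1,M}_{n,i}+\hat{X}^{2,M}_{n,i}$. Following the decomposition used in Lemma \ref{lem_hat_2} but conditioning on the values of $(X^M_{n,i},X^M_{m,j})$, the joint PGF of $(\hat{X}^M_{n,i},\hat{X}^M_{m,j})$ is an explicit bounded functional of $\P(X^M_{n,i}=p,X^M_{m,j}=q)$. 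Since $\PAI(X)$ delivers pointwise convergence of these joint masses to $\P(\tilde{X}_i=p)\P(\tilde{X}_j=q)$ and all terms are bounded by $1$, dominated convergence yields $\PAI(\hat{X})$ with limiting laws $\tilde{\hat{X}}_i=g_{1,i}(\tilde{X}_i)\one_{\{U<\sigma_i(\tilde{X}_i)\}}+g_{2,i}(\tilde{X}_i)\one_{\{U>\sigma_i(\tilde{X}_i)\}}$.

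\textbf{Step two (arrivals).} Fix a destination node $i$ and a replica $n$, and compute the PGF of $A^M_{n,i}$ by conditioning on $(X^M,U)$ and using independence of the routing variables $R^M_{m,j,i}$, exactly as in Lemma \ref{lem_conv_A_2}. This yields
\begin{equation*}
\Esp\bigl[z^{A^M_{n,i}}\bigr]=\Esp\exp\Bigl(\ssum_{m\neq n}\ssum_{j\neq i}\log\bigl(1-\tfrac{1}{M-1}(1-z^{h_{i,j}(X^M_{m,j})\one_{\{U_{m,j}<\sigma_j(X^M_{m,j})\}}})\bigr)\Bigr).
\end{equation*}
Sandwiching $\log(1-x)$ between $-x$ and $-x-x^2/2$ and applying Lemma \ref{lem_gen_tlln} separately to each inner sum over $m$ (for each fixed source $j$, using the bounded function $(k,u)\mapsto 1-z^{h_{i,j}(k)\one_{\{u<\sigma_j(k)\}}}$, whose boundedness follows from the fact that the $h_{i,j}$ are bounded by assumption), we get convergence of each inner average to $1-\Phi_{i,j}(z)$ with $\Phi_{i,j}(z)=\Esp[z^{h_{i,j}(\tilde{X}_j)\one_{\{U<\sigma_j(\tilde{X}_j)\}}}]$. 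Since $K$ is fixed and finite, the limiting PGF is
\begin{equation*}
\Esp\bigl[z^{\tilde{A}_i}\bigr]=\exp\Bigl(\ssum_{j\neq i}(\Phi_{i,j}(z)-1)\Bigr),
\end{equation*}
which is the PGF of a sum of $K-1$ independent compound Poisson variables, hence compound Poisson.

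\textbf{Step three (propagation of $\PAI$ and asymptotic independence from states).} To obtain $\PAI(Y)$, I would compute $\Esp[u^{Y^M_{n,i}}v^{Y^M_{m,j}}]$ for $(n,i)\neq (m,j)$ by conditioning on $(X^M,U)$ and factoring the contribution of the routing variables. The output is a product $\Esp[\phi_1^M(u,v)\phi_2^M(u,v)]$ exactly as in Lemma \ref{lem_pai_y}, where $\phi_1^M$ isolates the ``self'' contributions $\hat{X}^M_{n,i},\hat{X}^M_{m,j}$ together with the $O(1/M)$ routing corrections, and $\phi_2^M$ gathers the arrival contributions from the other replicas. Since removing the two missing terms from each sum in $\phi_2^M$ changes it by an $O(1/M)$ factor, Step one, Step two, and Lemma \ref{lem_gen_tlln} combine to give the limit
\begin{equation*}
\Esp[u^{\tilde{\hat{X}}_i}]\Esp[v^{\tilde{\hat{X}}_j}]\Esp[u^{\tilde{A}_i}]\Esp[v^{\tilde{A}_j}]=\Esp[u^{\tilde{Y}_i}]\Esp[v^{\tilde{Y}_j}],
\end{equation*}
with $\tilde{Y}_i=\tilde{\hat{X}}_i+\tilde{A}_i$ and $\tilde{\hat{X}}_i,\tilde{A}_i$ independent. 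The independence between the states and the arrivals falls out of the same factorization in the limit.

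\textbf{Expected obstacles.} The structural steps carry over essentially verbatim from the symmetric case, so the bookkeeping, not any new idea, is the main effort. The one place that requires genuine care is Step two: the inner sum $\ssum_{j\neq i}$ must be handled source by source because the limiting law $\tilde{X}_j$ and the kernel $h_{i,j}$ both depend on $j$. Because $K$ is finite this is harmless, but it does mean Lemma \ref{lem_gen_tlln} is applied $K-1$ times rather than once, and the bounded hypothesis on $h_{i,j}$ is essential here to ensure the integrand $1-z^{h_{i,j}(k)\one_{\{u<\sigma_j(k)\}}}$ has compact support in $k$ required by Lemma \ref{lem_gen_tlln}. A parallel remark applies in Step three, where the cross-terms between the ``self'' factors and the ``arrival'' factor must be argued to vanish uniformly, which again uses boundedness of the $h_{i,j}$.
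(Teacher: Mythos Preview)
Your proposal is correct and follows essentially the same three-step route as the paper: a node-indexed analogue of Lemma~\ref{lem_hat_2} for fragmentation, a node-indexed analogue of Lemma~\ref{lem_conv_A_2} (the paper's Lemma~\ref{lem_A_conv_gen}) for the compound Poisson limit of arrivals, and a rerun of Lemmas~\ref{lem_PAI_A} and~\ref{lem_pai_y} for $\PAI(Y)$ and the independence of $\tilde{\hat{X}}_i$ from $\tilde{A}_j$. One small quibble: in your ``Expected obstacles'' you say boundedness of $h_{i,j}$ ensures the integrand has \emph{compact support in $k$}; it does not (the function $k\mapsto 1-z^{h_{i,j}(k)\one_{\{u<\sigma_j(k)\}}}$ is generically nonzero for all $k\ge 1$), but what boundedness of $h_{i,j}$ does ensure is that this function takes only finitely many values, hence is a finite linear combination of indicators, which is all the proof of Lemma~\ref{lem_gen_tlln} actually needs.
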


We once again require the following lemmas for the proof.

We replace Lemma \ref{lem_gen_tlln} with the following similar result, taking into account the fact that the limiting distribution now depends on the node:

\begin{lemma}
\label{lem_gen_tlln_gen}
Let $M \in \mathbb{N}$, let $Z=\{Z^{M}_{n,i}\}_{n \in \{1,\ldots,M\}, i \in \{1,\ldots,K\}}$ be an array of integer valued random variables verifying $\PAI(Z)$. Then for all bounded functions $f : \mathbb{N}\times [0,1] \rightarrow \mathbb{R}$ with compact support, for all i.i.d. sequences of random variables $U=\{U_{n,i}\}_{n \in \{1,\ldots,M\}, i \in \{1,\ldots,K\}}$ independent from $Z$, there exists $U$ independent from $(\tilde{Z}_i)_{i \in \{1,\ldots,K\}}$ and $Z$ such that for all $i \in \{1,\ldots,K\}$, we have the following limit in $L^2$:
\begin{equation}
\label{eq_modif_gen_tlln_1}
\lim_{M\rightarrow\infty}\frac{1}{M}\sum_{n=1}^M f(Z_{n,i}^M, U_{n,i})=\Esp[f(\tilde{Z}_i,U)].
\end{equation}
\end{lemma}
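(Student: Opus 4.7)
The plan is to closely mimic the proof of Lemma \ref{lem_gen_tlln}, since fixing the node index $i$ reduces the problem to a single array indexed by the replica. The only difference with the symmetric setting is that PAI now supplies a node-dependent limit $\tilde{Z}_i$, but this is precisely the object that appears on the right-hand side of \eqref{eq_modif_gen_tlln_1}, so no new machinery is required. I would invoke Lemma \ref{lem_L2} to reduce $L^2$ convergence of $S_M := \frac{1}{M}\sum_{n=1}^M f(Z^M_{n,i},U_{n,i})$ to verifying that $\Esp[S_M] \to \Esp[f(\tilde{Z}_i,U)]$ and $\var(S_M)\to 0$.

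For the mean, exchangeability in $n$ gives $\Esp[S_M] = \Esp[f(Z^M_{1,i},U_{1,i})]$. Setting $v=1$ in Definition \ref{def_pai_gen} yields $Z^M_{1,i}\to \tilde{Z}_i$ in distribution; since $Z^M_{1,i}$ is integer-valued and $f$ is bounded, for each fixed $u$ one has $\Esp[f(Z^M_{1,i},u)]\to \Esp[f(\tilde{Z}_i,u)]$, and integrating against the law of $U_{1,i}\overset{d}{=}U$ (independent of $Z$) by dominated convergence gives the desired limit, with $U$ independent of the family $(\tilde{Z}_i)_i$ and of $Z$.

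For the variance, exchangeability yields
\begin{equation*}
\var(S_M) = \tfrac{1}{M}\var(f(Z^M_{1,i},U_{1,i})) + \tfrac{M-1}{M}\cov\bigl[f(Z^M_{1,i},U_{1,i}),f(Z^M_{2,i},U_{2,i})\bigr].
\end{equation*}
The first term vanishes as $M\to\infty$ because $f$ is bounded. For the covariance, I would first take $f(n,t) = \one_{\{n\in B\}}\one_{\{t\in C\}}$; using independence of $Z$ from the i.i.d.\ family $\{U_{n,i}\}$, the covariance factors as
\begin{equation*}
\bigl[\P(Z^M_{1,i}\in B, Z^M_{2,i}\in B) - \P(Z^M_{1,i}\in B)\P(Z^M_{2,i}\in B)\bigr]\P(U\in C)^2,
\end{equation*}
which tends to $0$ by applying $\PAI(Z)$ to the pair of distinct indices $(1,i),(2,i)$ (both marginal limits being $\tilde{Z}_i$). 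This extends to bounded $f$ with compact support, since such functions take only finitely many values and can be written as finite linear combinations of product indicators.

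I do not anticipate any substantive obstacle: the argument is essentially bookkeeping, because the node index $i$ is held fixed throughout and the modification of PAI in Definition \ref{def_pai_gen} is tailored so that the two replicas $(1,i)$ and $(2,i)$ share the common limit law $\tilde{Z}_i$. The only point worth being slightly careful about is to use PAI at pairs of indices that share the same $i$, so that the factorization produces the same $\tilde{Z}_i$ on both sides — which is exactly what is needed for the covariance term to collapse.
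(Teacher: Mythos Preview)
Your proposal is correct and follows exactly the approach the paper takes: the paper simply states that ``the proof is exactly the same as for Lemma~\ref{lem_pai_tlln}'' (i.e., the argument of Lemma~\ref{lem_gen_tlln} carried over verbatim with $\tilde{Z}$ replaced by $\tilde{Z}_i$), which is precisely what you have spelled out. One small caveat: a bounded $f:\mathbb{N}\times[0,1]\to\mathbb{R}$ with compact support need not take finitely many values nor be a finite combination of product indicators (the $[0,1]$-variable can vary continuously); the clean way to extend is to condition on $(U_{1,i},U_{2,i})$ first, so that for fixed $u_1,u_2$ the functions $f(\cdot,u_1),f(\cdot,u_2)$ have finite support on $\mathbb{N}$, and then integrate using dominated convergence --- but the paper is equally brief on this point.
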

The proof is exactly the same as for Lemma \ref{lem_pai_tlln}.

We must replace Lemma \ref{lem_A_conv} with the following result:
\begin{lemma}
\label{lem_A_conv_gen}
Supposing that $\PAI(X)$ holds, when $M \rightarrow \infty$:
$A^M_{n,i} \rightarrow \tilde{A}_i$ in distribution,
where $\tilde{A}_i$ follows a compound Poisson distribution.
\end{lemma}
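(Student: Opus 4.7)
The plan is to follow the template of Lemmas~\ref{lem_A_conv} and~\ref{lem_conv_A_2}, adapting every step to the heterogeneous setting where both the activation/interaction functions and the limiting laws depend on the node indices. First I would compute the probability generating function of $A^M_{n,i}$ by conditioning on $(X^M,U)$ and exploiting that the routing variables $\{R^M_{m,j,i}\}$ are independent of everything else and uniform on $\{1,\ldots,M\}\setminus\{m\}$. This gives
\begin{equation*}
\Esp[z^{A^M_{n,i}}]=\Esp\left[\prod_{m\neq n}\prod_{j\neq i}\left(1-\frac{1}{M-1}\Bigl(1-z^{h_{i,j}(X^M_{m,j})\one_{\{U_{m,j}<\sigma_i(X^M_{m,j})\}}}\Bigr)\right)\right],
\end{equation*}
which I would then rewrite as the expectation of an exponential of a sum of logarithms.

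The second step is to sandwich this quantity using $-x-x^2/2\le\log(1-x)\le -x$ on $[0,1]$, exactly as in the proof of Lemma~\ref{lem_A_conv}. The key structural observation is that since $K$ is finite, the double sum $\sum_{m\neq n}\sum_{j\neq i}$ can be split as a finite sum over $j$ of inner empirical averages over $m$. For each fixed $j\neq i$, the function
\begin{equation*}
f_{i,j}(x,u)=1-z^{h_{i,j}(x)\one_{\{u<\sigma_i(x)\}}}
\end{equation*}
is bounded (since $h_{i,j}$ is bounded by hypothesis), takes only finitely many values, and therefore falls within the scope of the generalized triangular law of large numbers of Lemma~\ref{lem_gen_tlln_gen}. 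Applied to the array $\{X^M_{m,j}\}_m$ — which satisfies $\PAI$ by assumption and whose column $j$ has limiting law $\tilde{X}_j$ — this yields the $L^2$ limit
\begin{equation*}
\frac{1}{M-1}\sum_{m\neq n}\Bigl(1-z^{h_{i,j}(X^M_{m,j})\one_{\{U_{m,j}<\sigma_i(X^M_{m,j})\}}}\Bigr)\longrightarrow 1-\Phi_{i,j}(z),
\end{equation*}
where $\Phi_{i,j}(z)=\Esp\bigl[z^{h_{i,j}(\tilde{X}_j)\one_{\{U<\sigma_i(\tilde{X}_j)\}}}\bigr]$ with $U$ uniform on $[0,1]$ independent of $\tilde{X}_j$. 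Summing the $K-1$ contributions indexed by $j$ and noting that the quadratic correction is $O(1/M)$, the upper and lower bounds on $\Esp[z^{A^M_{n,i}}]$ both converge, via $L^2$ convergence of the exponent and continuity of the exponential, to
\begin{equation*}
\exp\!\Bigl(-\sum_{j\neq i}\bigl(1-\Phi_{i,j}(z)\bigr)\Bigr).
\end{equation*}

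The third step is to identify this limit as a compound Poisson p.g.f. Writing $\Phi_{i,j}(z)-1=\lambda_{i,j}\bigl(\psi_{i,j}(z)-1\bigr)$ with $\lambda_{i,j}=\P\bigl(U<\sigma_i(\tilde{X}_j)\bigr)$ and $\psi_{i,j}$ the p.g.f. of $h_{i,j}(\tilde{X}_j)$ conditioned on activation, the limiting p.g.f. factors as a product of $K-1$ compound Poisson p.g.f.s, which is itself of the form~\eqref{eq_pgf_compoundp} with intensity $\sum_{j\neq i}\lambda_{i,j}$ and jump law the corresponding mixture of the $\psi_{i,j}$. This identifies $\tilde{A}_i$ as compound Poisson and concludes the proof. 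The main obstacle is in fact purely bookkeeping: one must be careful that the generalized TLLN be invoked once per source index $j$ because the limit $\tilde{X}_j$ depends on $j$, but since $K$ is fixed and finite this causes no uniformity issue and the rest of the argument is identical to the symmetric case.
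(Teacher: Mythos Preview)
Your proposal is correct and follows essentially the same route as the paper's own proof: compute the p.g.f.\ by conditioning on $(X^M,U)$ to integrate out the routing variables, sandwich via the inequalities $-x-x^2/2\le\log(1-x)\le -x$, apply the generalized triangular law of large numbers (Lemma~\ref{lem_gen_tlln_gen}) to each empirical average, and recognize the limiting p.g.f.\ as compound Poisson. Your write-up is in fact more explicit than the paper's (which compresses everything into ``using the same arguments as before''), and your use of $\tilde{X}_j$ in the limit $\Phi_{i,j}(z)=\Esp\bigl[z^{h_{i,j}(\tilde X_j)\one_{\{U<\sigma_i(\tilde X_j)\}}}\bigr]$ is the right dependence on the source node, whereas the paper's displayed formula writes $\tilde{X}_i$, which appears to be a typo.
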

\begin{proof}
We have for $z\in [0,1], i \in \{1,\ldots,K\}, n \in \{1,\ldots,M\}$, that
\begin{equation}
\label{eq_gen_A_1}
\Esp\left[z^{A^M_{n,i}}\right]=\Esp\left[e^{\sum_{m \neq n}\sum_{j \neq i}\log\left(1-\frac{1}{M-1}\left(1-z^{h_{i,j}(X^M_{m,j})\one_{\{U_{m,j}<\sigma_i(X^{M}_{m,j})\}}}\right)\right)}\right].
\end{equation}
Using the same arguments as before, we have when $M \rightarrow \infty$
\begin{equation}
\label{eq_gen_A_2}
\Esp\left[z^{A^M_{n,i}}\right] \rightarrow e^{\Phi_i(z)},
\end{equation}
where $\Phi_i(z)=-\sum_{j \neq i}\Esp\left[1-z^{h_{i,j}(\tilde{X_i})\one_{\{U<\sigma_i(\tilde{X}_i)\}}}\right]$.
Therefore,
\begin{equation}
\label{eq_gen_A_3}
\Esp\left[z^{A^M_{n,i}}\right] \rightarrow e^{-\sum_{j \neq i}\left(1-\Esp\left[z^{h_{i,j}(\tilde{X}_i)\one_{\{U<\sigma_i(\tilde{X}_i)\}}}\right]\right)}.
\end{equation}
The expression is of the form \eqref{eq_pgf_compoundp}, which proves Lemma \ref{lem_A_conv_gen}.
\end{proof}

We now prove Theorem \ref{thm3}.
\begin{proof}
We use the same reasoning as previously.

Lemma \ref{lem_pai_tlln} still holds (the replicas are still exchangeable, only the nodes are not). Lemma \ref{lem_gen_tlln_gen} replaces Lemma \ref{lem_gen_tlln}. Since the functions $g_{1,i}$ and $g_{2,i}$ only depend on the node and not on the replica index, an equivalent result to Lemma \ref{lem_hat_2} still holds. Lemma \ref{lem_A_conv} is replaced by Lemma \ref{lem_A_conv_gen}.
For asymptotic independence, we have, using the same arguments as in the proof of Lemma \ref{lem_PAI_A}, that for $u,v \in [0,1]$, for $n \neq m$ and $i \neq j$,
\begin{equation}
\label{eq_A_AI_1}
\Esp\left[u^{A^M_{n,i}}v^{A^M_{m,j}}\right] \rightarrow e^{-\sum_{i' \neq i}\left(1-\Esp\left[u^{h_{i,i'}(\tilde{X}_i)\one_{\{U<\sigma_i(\tilde{X}_i)\}}}\right]\right)-\sum_{j' \neq j}\left(1-\Esp\left[v^{h_{j,j'}(\tilde{X}_j)\one_{\{U<\sigma_i(\tilde{X}_j)\}}}\right]\right)},
\end{equation}
when $M \rightarrow \infty$. The other cases ($n=m$ and $i=j$) are also valid.
Lemma \ref{lem_pai_y} also still holds, with only minor differences in the limit expressions.
\end{proof}
Note that once again, this proves that the limit processes $\tilde{\hat{X}}_i$ and $\tilde{A}_j$ are independent for all $i,j \in \{1,\ldots,K\}.$

As an application, let us apply this result to the model from Section 2 with the addition of nonexchangeable interactions. Namely, we consider $h_{i,j}(X^M_{m,j})=\mu_{i,j}$ with $\mu_{i,j} \in \mathbb{N}$ (potentially zero). In this case, Theorem \ref{thm3} proves the propagation of chaos in this system, and the limit distributions of arrivals at the different nodes are characterized by, for $i \in \{1,\ldots,K\}$ and $z \in [0,1]$,
\begin{equation}
\label{eq_example_1}
\Esp\left[z^{\tilde{A}_i}\right] = e^{\theta_i\sum_{j \neq i}\left(z^{\mu_{i,j}}-1\right)} = \prod_{j \neq i} e^{\theta_i\left(z^{\mu_{i,j}}-1\right)},
\end{equation}
where $\theta_i=\Esp\left[\sigma_i(\tilde{X}_i)\right]$.
Note that as expected, when all $\mu_{i,j}$ are equal to one, we obtain the result from Section 2.

\section{Extensions}\label{sec:ext}
There are several ways of extending the FIAP framework while preserving the
basic properties proved in the present paper (propagation of chaos and Poisson hypothesis).
We decided not to include them in the general framework in order to keep notation and exposition light.
A few natural extensions of this type are nevertheless discussed below.

\paragraph{Random Interactions} The functions $h_{i,j}(k)$ can be
replaced by randomized functions of the type $h_{i,j}(k,V_{i,j})$ where
the random variables $\{V_{i,j}\}_{1\le i,j\le K}$ are uniform in $[0,1]$
and i.i.d.. This allows one to represent, e.g., the queuing theory scenario where a
customer leaving a queue is randomly routed to an other queue of the network according to some
stochastic routing matrix $\{p_{j,i}\}_{1\le i,j \le K}$, namely a customer
leaving queue $i$ is routed to queue $j$ with probability $p_{j,i}$. If
the random variables $\{V_{i,j}\}$ are independent of $\{X_i\}_i$, then the main results still hold.

\paragraph{Time in-homogeneous dynamics} The general setting of the paper implicitly suggests to use the same (activation, fragmentation, and interaction) functions
at all time steps for a given node. There is no difficulty extending the results to the time in-homogeneous case where these functions depend on the time step. In the neural network case, this for instance happens in certain learning dynamics where the synaptic weights evolve over time.

\paragraph{Exogenous input and output} 
To the {\em endogenous} arrivals $A_i$ to node $i$
given in Equation (\ref{eq_gendef_1}), we add arbitrary {\em exogenous} arrivals $B_i\in \mathbb N$. In the special care where the random 
variables $\{B_i\}$ are independent, Poisson, and independent of the state variables $\{X_i\}_i$
then the same results still hold. Note that one can also define an {\em exogenous output} for
node $i$ through the relation
$$D_i= h_{o,i} (X_i) \one_{\{U_i<\sigma_i(X_i)\}}\, ,$$
where $h_{o,i}$ is a given output function $\mathbb N\to \mathbb N$.

Exogenous input point processes are needed for modeling reasons
in a variety of contexts (e.g., to represent requests from end-users
in a computer network, or input signals from sensors in a neural network).
Exogenous output processes are useful in, e.g., a two-layer network where the first layer feeds the second one, but not conversely.
Exogenous input and output point processes are instrumental in the partition scheme of the vector state example discussed below.
In that example, the exogenous input variables are neither necessarily Poisson nor independent.

\paragraph{Vector State - Example} This extension is first described through a
simple neural network example. We partition the set of neurons of a discrete
Galves-L\"{o}cherbach network in pairs (this assumes that $K$ is even).
Each pair of the partition is a node of the network. If $(i,j)$ is one of these nodes,
it has a two-dimensional vector state $(X_i,X_j)$ (rather than a one dimensional state in the
initial model). We let this pair (as well as each other pair in the partition) evolve as
a two-node  Galves-L\"{o}cherbach network with some {\em vector} exogenous input and output.
More precisely, conditionally on $(X_i,X_j)=(k,l)$, neurons $i$ and $j$
spike independently with probability $\sigma_i(k)$ and $\sigma_j(l)$ respectively.
If none of them spikes, the state $(Z_i,Z_j)$ after its endogenous evolution is still $(X_i,X_j)$.
If only $i$ (resp. $j$) spikes and the other neuron of the pair does not spike, then $(Z_i,Z_j)$
is equal $(0,X_j+r_{j,i})$ (resp. $(X_i+r_{i,j},0)$). If both spike, then $(Z_i,Z_j)=(r_{i,j},r_{j,i})$,
with $r_{k,l}$ equal to 1 of there is a directed edge from $l$ to $k$ and 0 otherwise.
Therefore, if $(B_i,B_j)$ denotes the vector exogenous input,
the state of this pair at time one is $(Z_i+B_i,Z_j+B_j)$, by combining the endogenous evolution
and the exogenous arrivals. 

Define now the {\em exogenous output of type $k\notin \{i,j\}$
of node $(i,j)$} by
\begin{equation}
D_k(i,j)= \one_{\{U_i<\sigma_i(X_i)\}} r_{k,i} + \one_{\{U_j<\sigma_j(X_j)\}} r_{k,j} ,
\end{equation}
The extension of interest here is that where we take the following exogenous input to node $(i,j)$:
\begin{equation}
B_i=\sum_{k\notin \{i,j\}} D_i(k,l(k))\one_{\{k<l(k)\}}, \qquad
B_j=\sum_{k\notin \{i,j\}} D_j(k,l(k))\one_{\{k<l(k)\}},
\end{equation}
with $l(k)$ the neuron paired with $k$. 

Note that with this definition, when neuron $i$ spikes, the effect
on pair $(k,l)$ with $l=l(k)$ is as follows: no effect if $r_{k,i}=r_{l,i}=0$; one arrival in $k$ and none in $l$
(resp. one in $l$ and none in $k$) if $r_{k,i}=1$ and $r_{l,i}=0$ (resp.
$r_{l,i}=1$ and $r_{k,i}=0$); a simultaneous arrival in both $l$ and $k$ otherwise.
This defines a network which does not belong to the FIAP class, were it only because the state is now a vector.
The $M$-RMF model features $M$ replicas of this network with $K/2$ (vector state) nodes each.
In this $M$-RMF model, the exogenous output of node/pair $(i,j)$ in replica $m$ is randomly sent to a replica chosen at random. More precisely, for all exogenous output type $k$ paired with $l$,
$$D_k^m(i,j)= \one_{\{U_i^m<\sigma_i(X^m_i)\}} r_{k,i} + \one_{\{U^m_j<\sigma_j(X^m_j)\}} r_{k,i},$$
$${\mathrm {(resp.}}\ D_l^m(i,j)= \one_{\{U_i^m<\sigma_i(X^m_i)\}} r_{l,i} +
\one_{\{U^m_j<\sigma_j(X^m_j)\}} r_{l,i}) $$
units are sent to $k$ (resp. $l$) of another replica selected uniformly at random
where they are aggregated to the coordinates of the state variable of this pair.
It can be shown that when $M$ tends to infinity, (1)
the random state vectors
$(X^m_{i},X^m_{j})$ and $(X^m_{i'},X^m_{j'})$, where $(i,j)$ and $(i',j')$ are
two different pairs, are asymptotically independent (although the two coordinates
of each vector are in general dependent);
(2) the
exogenous arrivals to any coordinate of a pair in a typical
replica tends to an independent compound Poisson variable.

\paragraph{Vector State - General Case}
Consider a FIAP $F$ with $K$ nodes. Let $S_1,S_2,\ldots,S_l$ be a partition
of $[1,\ldots,K]$. Let $K_p$, $1\le p\le l$ denote the cardinality of set $S_p$,
and let $F_p$ be the restriction of $F$ to the coordinates of $S_p$.
Let $\mathfrak{F}_p$ be the FIAP combining the endogenous dynamics of $F_p$
and exogenous input $(\mathfrak{B}_{p,i},\ i\in S_p)$. Let
$\mathfrak{X}_{p,i}$ denote the state variables in $\mathfrak{F}_p$.
For all $k\notin S_p$, define the {\em exogenous output} of type $k$ of $\mathfrak{F}_p$ as
\begin{equation}
\label{eq:gendep}
\mathfrak{D}_p(k)= \sum_{i\in S_p}
\one_{\{U_{p,i}<\sigma_i(\mathfrak{X}_{p,i})\}} h_{k,i}(\mathfrak{X}_{p,i}).
\end{equation}
Note that $\mathfrak{D}_p(k)$ is also what coordinate $k$ receives as exogenous input from $S_p$. That is, if we take
\begin{equation}
\label{eq:genarr}
\mathfrak{B}_{p,i}=\sum_{q\ne p}\mathfrak{D}_q(i),\ i \in S_p,
\end{equation}
we get another (more complex) representation of the dynamics of $F$ based on the point processes describing the interactions between the sets of the partition.
The $M$-RMF model associated with this partition features $M$ replicas
of this network with $q$ (vector state) nodes each.
In this $M$-RMF model, the {\em exogenous} output of node $i\in S_p$ of replica $m$ is
randomly sent to replicas chosen at random. More precisely, for all $q\ne p$,
the vector $(\mathfrak{D}_p^m(k),\ k\in S_q)$, with $\mathfrak{D}_p^m(k)$
defined as in (\ref{eq:gendep}), is sent to one replica chosen at random, and
this is done independently for all $q\ne p$.
This in turn defines new exogenous input point processes
$\mathfrak{B}_{p,i}^m$ as in (\ref{eq:genarr}).
Let $(\mathfrak{X}^m_{p,i},\ i\in S_p,\ p=1,\ldots,l,\ m=1,\ldots,M)$
denote the state variables in this $M$-RMF model.
It can be shown that, when $M$ tends to infinity,
\begin{enumerate}
\item for all $p\ne q$, the random state vectors
$(\mathfrak{X}^m_{p,i},\ i\in S_p)$ and $(\mathfrak{X}^m_{q,j},\ j\in S_q)$,
are asymptotically independent (although the coordinates of each vector are in general dependent);
\item for all $p$, and for all $m$, the exogenous arrivals $(\mathfrak{B}^m_{p,i},\ i\in S_p)$ to set $S_p$
tend to an independent multivariate compound Poisson variable
 with multivariate generating function
$$ \exp\left(\sum_{q\ne p} \sum_{n_i\in \mathbb N, i\in S_q} \sum_{s\subset S_q} \pi_{q,s,(n_i)} 
\left( 1-\prod_{i\in s} \prod_{k\in S_q} z_k^{h_{k,i}(n_i)}\right)\right).$$
In this last equation,
$$ \pi_{q,s,(n_i)}= 
\mathbf{P} [\tilde{\mathfrak{X}}_{q,i}=n_i,\ i\in S_q]
\prod_{j\in s} \sigma_j(n_j) \prod_{j'\in S_q\setminus s} (1-\sigma_{j'}(n_{j'})),$$ 
where $(\tilde{\mathfrak{X}}_{q,i})$ denotes random variables
with the limiting joint distribution assumed in the vector generalization of PAI.
\end{enumerate}
\section*{Conclusion}
A new class of discrete time dynamics involving point process based
interactions between interconnected nodes was introduced.
The Poisson Hypothesis was proved for the RMF version of such dynamics.
The proof is based on the property of pairwise asymptotic independence
between replicas and is by induction over time.
The key point is that randomized routing decisions on exchangeable events which are asymptotically
independent lead to Poisson point processes. As for future research, a natural question is whether these results extend to continuous-time and continuous-space versions of FIAP dynamics. 
The extension to continuous-time FIAPs will follow from similar arguments as presented here under the condition that the replica-limit/time-limit diagram commutes for FIAPs.
The extension to continuous-space FIAPs appears to require distinct analytical tools than those presented here.
Finally, another question of interest is whether the Poisson Hypothesis can be shown for the RMF limits of other classes of systems besides FIAPs.

\paragraph{Acknowledgements} The authors would like to thank N. Gast, S. Rybko, S. Shlosman and the anonymous referees for their comments on the present paper.

\paragraph{Funding}
T.T. was supported by the Alfred P. Sloan Research Fellowship FG-2017-9554.
F.B. was supported by an award from the Simons Foundation (\#197982). Both
awards are to the University of Texas at Austin. F.B. and M.D were supported the ERC NEMO grant (\# 788851) to INRIA Paris.

\bibliographystyle{plain}
\bibliography{discrete_case}

\end{document}